\documentclass[10pt, a4paper]{article}

\usepackage{times}
\usepackage{amsmath}
\usepackage{amsfonts}
\usepackage{amssymb}
\usepackage{graphicx}
\usepackage{authblk}
\usepackage{babel}
\usepackage{amsthm}
\usepackage[font=small,labelfont=bf]{caption}
\DeclareCaptionFont{ninept}{\fontsize{9pt}{11pt}\selectfont}
\captionsetup[figure]{font=ninept}
\usepackage{subcaption}
\usepackage{physics}
\usepackage{listings}
\usepackage{float}
\usepackage{mathrsfs}
\usepackage{mathtools}
\usepackage{parskip}
\usepackage{microtype}
\usepackage{setspace}
\usepackage{booktabs}
\usepackage{enumitem}
\usepackage{xcolor}
\usepackage{etoolbox}
\apptocmd{\thebibliography}
  {\setlength{\parskip}{0pt}
   \setlength{\itemsep}{0pt}
   \setlength{\parsep}{0pt}
   \setlength{\topsep}{0pt}
   \setlength{\partopsep}{0pt}}
  {}{}
\usepackage{ragged2e}
\AtBeginDocument{\justifying}
\usepackage{titlesec}
\usepackage{fancyhdr}
\pagestyle{fancy}
\fancyhf{}

\usepackage[a4paper,
top    = 1.5cm,
bottom = 1.5cm,
left   = 1.5cm,
right  = 1.5cm]{geometry}

\title{\centering\bfseries\fontsize{16pt}{19.2pt}\selectfont A Hamilton–Jacobi Framework in a Field–Road System with Unidirectional Advection under Wentzell-Type Boundary Condition}
\author[*,1]{\centering\fontsize{12pt}{14.4pt}\selectfont Xinye Xiao}
\affil[1]{\centering\fontsize{12pt}{14.4pt}\selectfont School of Mathematics and Physics, China University of Geosciences, Wuhan, P. R. China}
\affil[1]{\centering\fontsize{12pt}{14.4pt}\selectfont Email: 20221003491@cug.edu.cn}
\author[2]{\centering\fontsize{12pt}{14.4pt}\selectfont Haomin Huang}
\affil[2]{\centering\fontsize{12pt}{14.4pt}\selectfont School of Mathematics and Physics, China University of Geosciences, Wuhan, P. R. China}
\affil[2]{\centering\fontsize{12pt}{14.4pt}\selectfont Email: huanghaomin@cug.edu.cn}

\date{}

\theoremstyle{plain}
\newtheorem{proposition}{Proposition}
\newtheorem{definition}{Definition}
\newtheorem{theorem}{Theorem}
\newtheorem{lemma}{Lemma}
\newtheorem{remark}{Remark}

\newcommand{\R}{\mathbb{R}}
\newcommand{\Rplus}{\mathbb{R}_+}

\newcommand{\Chi}{\chi}
\newcommand{\T}{\mathcal{T}}
\newcommand{\B}{\mathcal{B}}

\renewenvironment{abstract}
  {\begin{center}
     \bfseries\fontsize{11pt}{13.2pt}\selectfont
     ABSTRACT
   \end{center}
   \vspace{-12pt}
   \vspace{24pt}
   \fontsize{10pt}{12pt}\selectfont
   \noindent\ignorespaces}
   {\par}

\newcommand{\keywords}[1]{
  \vspace{0pt}
  \noindent
  \textbf{Keywords:} #1
  \par
}

\setlength{\parindent}{0pt}
\setlength{\parskip}{1em}

\begin{document}

\maketitle

\thispagestyle{empty}

\begin{abstract}
This paper develops a comprehensive Hamilton-Jacobi framework to analyze asymptotic propagation dynamics in a field-road system featuring unidirectional advection and Wentzell-type boundary conditions. We rigorously derive a Hamilton-Jacobi variational inequality as the singular limit of a reaction-diffusion system in the upper half-plane, where the road is modeled as a degenerate one-dimensional medium with enhanced diffusion and tangential drift. By synthesizing viscosity solution theory, optimal control formulation, and variational analysis, we establish the existence, uniqueness, and explicit variational representation of the viscosity solution. The solution is characterized by a fundamental solution constructed via optimal paths, revealing a critical transition in propagation behavior governed by a geometrically derived curve that separates rectilinear and road-assisted regimes. Our framework extends to non-order-preserving systems where classical comparison methods fail, and we provide a detailed asymptotic derivation of the Wentzell boundary condition from flux continuity principles. Furthermore, we generalize the approach to conical domains with intersecting roads, demonstrating the robustness of our variational methodology. Numerical simulations illustrate how advection and diffusion parameters shape the invaded region, highlighting the interplay between field and road dynamics in determining propagation patterns.
\end{abstract}

\keywords{Hamilton-Jacobi equations, Viscosity solutions, Optimal control, Reaction-diffusion systems, Wentzell boundary conditions, Field-road model, Asymptotic propagation, Variational inequality, Singular limit, Geometric optics}

\section{INTRODUCTION}
This paper focuses on the analysis of a Hamilton-Jacobi variational inequality with drift under Wentzell-type boundary conditions. To place the variational inequality (1.1) in its modeling context, we briefly recall the physical geometry and the limiting procedure that leads to the Wentzell-type boundary condition. We begin with a two-layer field–road model in which the linear corridor (the "road") is represented by a strip of finite thickness \(\delta>0\) centered at \(y=0\) and endowed with enhanced longitudinal diffusivity and a possibly different advective velocity. In the full parabolic system, the concentrations in the strip and in the upper half-plane are coupled by the continuity of flux across the microscopic interfaces. Performing an asymptotic homogenization / singular limit as \(\delta \to 0\) and applying a WKB ansatz yield, after appropriate rescaling, an effective description on the half-plane in which the road enters as an enhanced boundary operator. The Wentzell-type boundary condition emerges naturally as the limiting flux continuity relation (detailed derivation in Appendix A). This presentation justifies treating the road as a degenerate one-dimensional medium coupled to the field by a higher-order boundary operator and motivates the variational–Hamilton–Jacobi formulation below. Subsequently, the following system is obtained:
\begin{equation}
\begin{cases}
\min\big\{v_t + |\nabla v|^2 + c v_x + 1,\ v\big\} = 0, & \quad x \in \R, \, y > 0, \, t \in \R_+,\\
a v_x^2 - v_y + b v_x = 0, & \quad x \in \R, \, y = 0, \, t \in \R_+,\\
v(x,y,0) = 0, & \quad (x,y) = (0,0),\\
v(x,y,t) \to \infty \quad \text{as } t \to 0+, & \quad (x,y) \in \overline{\Rplus^2} \setminus \{(0,0)\}.
\end{cases}
\tag{1.1}
\end{equation}

This variational inequality arises naturally in the \emph{geometric optics} approximation of reaction-diffusion systems with unidirectional advection. It was rigorously derived by Li and Wang \cite{ref1} as the singular limit of a field-road paradigm, where the solution $u$ of the Fisher-KPP equation 
\[
\begin{cases}
u_t - \Delta u + c u_x = u(1 - u), & \quad x \in \R, \, y > 0, \, t \in \R_+,\\
a u_{xx} - b u_x + u_y = 0, & \quad x \in \R, \, y = 0, \, t \in \R_+,\\
u(x, y, 0) = u_0(x, y), & \quad x \in \R, \, y \geq 0.
\end{cases}
\tag{1.2}
\]

admits a WKB ansatz $u^\varepsilon = \exp(-v^\varepsilon/\varepsilon)$. Under hyperbolic scaling, the phase function $v^\varepsilon$ converges to the viscosity solution of (1.1) as $\varepsilon \to 0$ \cite{ref2,ref3}.

The field-road paradigm models biological invasions in fragmented landscapes where linear infrastructures (e.g., rivers, hedgerows, or anthropogenic corridors) profoundly alter dispersal dynamics. Consider an invasive species propagating through a two-dimensional habitat ($y>0$) bisected by a unidirectional thoroughfare ($y=0$). The advection term $b u_x$ captures directed movement along the corridor (e.g., wind-driven seed dispersal or aquatic transport in riverine systems), while the degenerate diffusion coefficient $a$ quantifies enhanced motility within the corridor—typically orders of magnitude higher than in the surrounding terrain. This dichotomy arises from empirical observations: linear features facilitate rapid long-distance dispersal via hydrodynamic forces or human-mediated transport, whereas the matrix habitat imposes resistance due to vegetation density or resource scarcity. The Wentzell condition $au_{xx} - bu_x + u_y = 0$ embodies flux continuity across the interface, reconciling Fickian diffusion in the bulk with anomalous superdiffusion along the corridor. Critical transitions in propagation speeds delineate regimes where invasion fronts leverage the corridor as a wave-guide versus scenarios where bulk diffusion dominates.

The coefficient $a > 0$ represents the transport capacity of the degenerate road. The parameter $c$ represents the advective velocity in the bulk field, where a positive value of $c$ indicates advection in the $(+x)$-direction. The parameter $b$ appears in the boundary operator and characterizes a tangential drift along the road. There is no inherent mathematical requirement for $b$ and $c$ to share the same sign; physically and biologically, the direction of the road drift may either align with or oppose the field advection. Nevertheless, the relative signs and magnitudes of $b$ and $c$ play a significant role in determining the propagation dynamics. These parameters jointly influence the critical boundary and the variational formula, leading to two distinct regimes: (i) when $b$ and $c$ have the same sign, their effects are additive in the downstream direction, enhancing propagation; and (ii) when their signs are opposite, competing transport mechanisms arise, resulting in asymmetric front profiles and potentially reduced downstream propagation speed. For the sake of consistency and clarity in the subsequent analysis, it is assumed that both b and c are positive throughout the discussion.

We establish that the viscosity solution of (1.1) admits an explicit variational representation:
\begin{equation}
v(x,y,t) = \max\left\{0, \varphi^*(x,y,t) - t\right\},
\tag{1.3}
\end{equation}

where the fundamental solution $\varphi^*$ is given by
\[
\varphi^*(x,y,t) := \min_{s \geq 0} \left\{ \frac{\left(-x + b\,s + c\,t\right)^2}{4(t+as)} + \frac{(y+s)^2}{4t} \right\}.
\]

Key properties of $\varphi^*$ include:
\begin{itemize}
    \item Homogeneity: $\varphi^*(tx,ty,t) = t\varphi^*(x,y,1)$
    \item $\mathcal{C}^1$-regularity across transition boundaries (Proposition 2)
    \item Strict convexity of the level set $\Omega = \{ (x,y) : \varphi^*(x,y,1) < 1 \}$ (Appendix B)
\end{itemize}

\begin{figure}[H]
    \centering
    \includegraphics[width=0.3\textwidth]{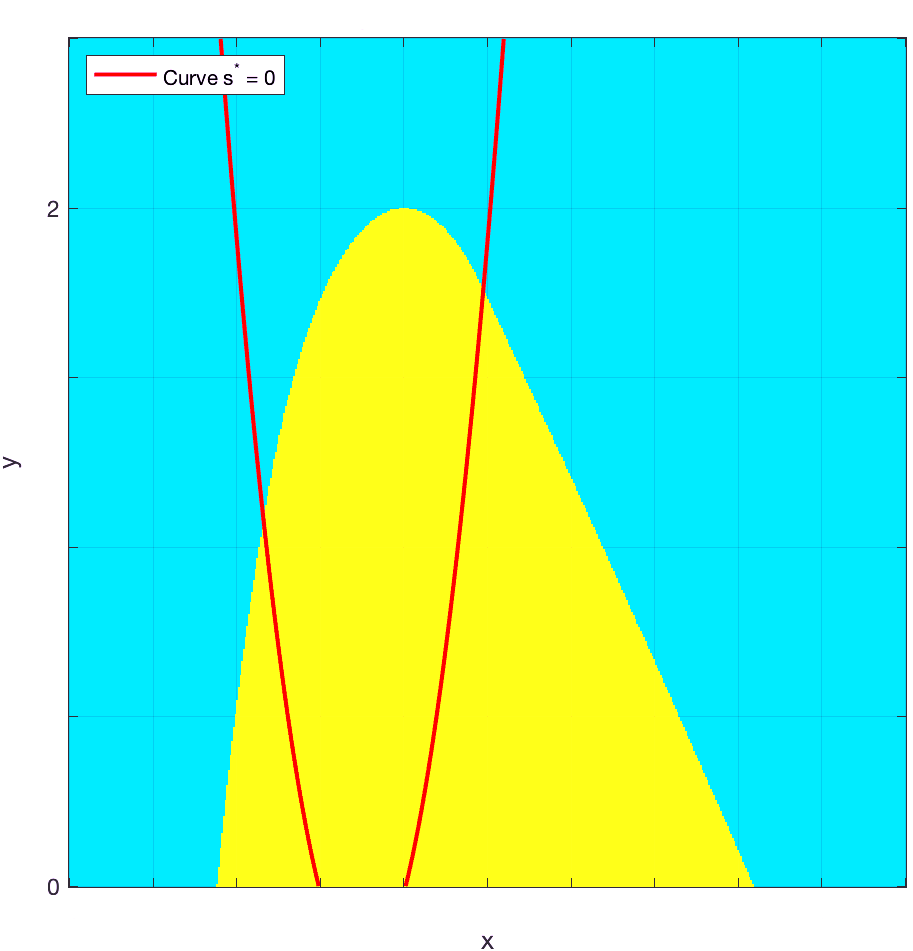}
    \caption{Optimal path topology for the fundamental solution $\varphi^*$. Panel (I): rectilinear propagation with $s^*=0$; Panel (II): field-road transition with $s^*>0$ where the optimal trajectory first reaches the road then travels along it. Dashed curve marks the critical transition boundary \(y=\frac{a}{2t}(x-ct)^2 + b(x-ct)\), which separates the rectilinear and field--road regimes.}
    \label{fig:optimal_path}
\end{figure}

Our approach synthesizes three methodologies:
\begin{enumerate}
    \item \textbf{Viscosity solution theory} for characterization and comparison principles (Section 2)
    \item \textbf{Optimal control formulation} with control triplets $(\gamma,\eta,l)$ and stopping times $\theta$ (Section 2)
    \item \textbf{Variational analysis} to identify minimizers $s^*$ and construct optimal paths (Section 3)
\end{enumerate}

The convergence $v^\varepsilon \to v$ is proved via half-relaxed limits and the dynamic programming principle (Theorem 2). The expression (1.3) follows from Freidlin's condition and explicit optimal path construction (Theorem 4).

\medskip

The Hamilton-Jacobi framework developed here extends to systems with multiple species \cite{ref4,ref5,ref6}. In particular, it enables the analysis of nonorder-preserving systems where classical comparison methods fail \cite{ref6}.

\medskip

As a natural geometric extension, Section 4 considers the problem formulated on a general \textbf{conical domain}, where two road segments intersect the field along distinct rays. This configuration effectively models scenarios in which linear transport corridors branch or converge at an angle, such as river confluences or intersecting roadways. We demonstrate that both the variational representation and the viscosity solution framework can be extended to this geometric setting, and that the enhancement in propagation induced by the roads can be characterized through analogous minimization formulas defined on the cone. Complete statements and rigorous proofs are provided in Section 4.

\section{A VARIATIONAL FORMULA AND VISCOSITY SOLUTIONS}

Consider the following variational inequality:
\begin{equation}
\begin{cases}
\min\{\mathcal{T}[v], v\} = 0, & x \in \mathbb{R},\ y > 0,\ t \in \mathbb{R}_+, \\
\mathcal{B}[v] = 0, & x \in \mathbb{R},\ y = 0,\ t \in \mathbb{R}_+, \\
v(x, y, 0) = g_0(x, y), & (x, y) \in \overline{\mathbb{R}_+^2},
\end{cases}
\tag{2.1}
\end{equation}

where
\[
\mathcal{T}[v] := v_t + |\nabla v|^2 +c v_x + 1, \quad \mathcal{B}[v] := a v_x^2 - v_y + b v_x,
\]

and the initial datum \( g_0 \) is positive, bounded, and Lipschitz continuous on \(\overline{\Rplus^2}\).

We study this problem using the theory of viscosity solutions. Denote:
\[
\mathbb{R}_+^2 := \mathbb{R} \times \mathbb{R}_+ .
\]

\begin{definition}
An upper semi-continuous function \(\underline{v}\) is a \textbf{viscosity sub-solution} of (2.1) on \(\overline{\Rplus^2} \times \mathbb{R}_+\) if for any 
\[
\phi \in C^1(\overline{\Rplus^2} \times \mathbb{R}_+) ,
\]

assuming \( \underline{v} - \phi \) attains a strict maximum at \((x, y, t) \in \overline{\Rplus^2} \times \mathbb{R}_+\) with \( \underline{v}(x, y, t) > 0 \), then:
\begin{itemize}
    \item If \( y \neq 0 \): \(\mathcal{T}[\phi](x, y, t) \leq 0\),
    \item If \( y = 0 \): \(\min\{\mathcal{T}[\phi](x, 0_+, t), B[\phi](x, 0, t)\} \leq 0\).
\end{itemize}
\end{definition}

\begin{definition}
A lower semi-continuous function \(\bar{v}\) is a \textbf{viscosity super-solution} of (2.1) if \(\bar{v} \geq 0\), and for any 
\[
\phi \in C^1(\overline{\Rplus^2} \times \mathbb{R}_+) ,
\]

assuming \(\bar{v} - \phi\) attains a strict minimum at \((x, y, t) \in \overline{\Rplus^2} \times \mathbb{R}_+\), then:
\begin{itemize}
    \item If \( y \neq 0 \): \(\mathcal{T}[\phi](x, y, t) \geq 0\),
    \item If \( y = 0 \): \(\max\{\mathcal{T}[\phi](x, 0_+, t), B[\phi](x, 0, t)\} \geq 0\).
\end{itemize}
\end{definition}

A function \( v \) is a \textbf{viscosity solution} if it is both a viscosity sub-solution and super-solution.

We state a comparison result on the whole plane and omit the proof details. The proof can be adapted from \cite{ref7} (for bounded domains) using techniques in \cite{ref3} or \cite{ref8}.

\begin{proposition}[Comparison principle for viscosity solutions]
Let \(\underline{v}\) and \(\bar{v}\) be a viscosity sub-solution and a super-solution of (2.1) respectively. Assume that \(\underline{v}\) is bounded above, \(\bar{v}\) is bounded below, and \(\underline{v}(x,y,0) \leq \bar{v}(x,y,0)\) on \(\overline{\Rplus^2}\), then \(\underline{v} \leq \bar{v}\) on \(\overline{\Rplus^2} \times \mathbb{R}_+\).
\end{proposition}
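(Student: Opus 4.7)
The plan is a doubling-of-variables argument, adapted to the unbounded half-plane $\overline{\Rplus^2}$ and to the quadratic Wentzell operator $\B$. I would argue by contradiction: fix $T>0$ and assume that $M := \sup_{\overline{\Rplus^2}\times[0,T]}(\underline v-\bar v)>0$. Since the inequality already holds on the initial slice, any near-maximizer has $t>0$. To produce an actual maximizer on the unbounded domain I introduce the penalized function
\[
\Phi_{\varepsilon,\eta,\mu}(x,y,t,x',y',t') := \underline v(x,y,t) - \bar v(x',y',t') - \frac{|x-x'|^2+(y-y')^2+(t-t')^2}{2\varepsilon} - \eta\bigl(\langle x\rangle+\langle x'\rangle+y+y'\bigr) - \frac{\mu}{T-t},
\]
where $\langle x\rangle := (1+|x|^2)^{1/2}$ and $\varepsilon,\eta,\mu>0$ are small auxiliary parameters. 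Standard Crandall--Ishii--Lions estimates produce a global maximizer whose two evaluation points collapse, along a subsequence as $\varepsilon\to 0$, to a common $(\bar x,\bar y,\bar t)$ with $\bar t>0$, $\varepsilon^{-1}(|x_\varepsilon-x'_\varepsilon|^2+(y_\varepsilon-y'_\varepsilon)^2+(t_\varepsilon-t'_\varepsilon)^2)\to 0$, and $(\underline v-\bar v)(\bar x,\bar y,\bar t)>0$ provided $\eta,\mu$ are chosen small. In particular $\underline v(\bar x,\bar y,\bar t)>0$, so the $v$-branch of the variational inequality is inactive and only the $\T$-alternative (together with $\B$ on the boundary) remains for the subsolution.

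If $\bar y>0$, then for small $\varepsilon$ both base points lie in the open half-plane and the quadratic penalty, minus the $\eta$- and $\mu$-corrections, serves as the test function for both the subsolution and the supersolution. The matching gradients and time derivatives generated by the penalty allow the Hamiltonian $|p|^2+cp_1+1$ to cancel when the two inequalities $\T[\cdot]\le 0$ and $\T[\cdot]\ge 0$ are subtracted, leaving only lower-order terms that vanish as $\varepsilon,\eta,\mu\to 0$; this contradicts $M>0$. The boundary case $\bar y=0$ is the main difficulty. Here the subsolution only asserts $\min\{\T[\cdot](x_\varepsilon,0_+,t_\varepsilon),\B[\cdot](x_\varepsilon,0,t_\varepsilon)\}\le 0$ while the supersolution asserts the corresponding $\max\ge 0$, yielding four sub-cases. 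The most delicate occurs when both alternatives land on the Wentzell operator $\B[\phi]=a\phi_x^2-\phi_y+b\phi_x$: the tangential gradients at the two points agree to leading order, so the quadratic term $a\phi_x^2$ matches in the difference, while the normal derivatives $-\phi_y$ inherit a definite sign from the fact that $y_\varepsilon,y'_\varepsilon\ge 0$ orients the penalty's $y$-gradient outward. Degenerate sub-cases, in which neither alternative is activated, are ruled out by the classical corrector device: subtract a small strict $\B$-supersolution, e.g.\ $\kappa(1-e^{-y/\kappa})$, from $\underline v$ to force the $\B$-branch strictly, and send $\kappa\to 0$ only after the contradiction is reached.

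The hardest step is this Wentzell boundary analysis: the quadratic dependence of $\B$ on $\phi_x$ makes the boundary operator genuinely nonlinear and prevents a naive Hopf-lemma argument, so the penalty must be engineered to keep the tangential derivative small (to cancel $a\phi_x^2$) while keeping the normal derivative outward (to extract a sign from $-\phi_y$). The variational-inequality structure $\min\{\T[v],v\}=0$ is in fact an ally here, since positivity of $\underline v$ at the contact point removes the obstacle constraint and reduces the subsolution to $\T[v]\le 0$. The unboundedness of $\overline{\Rplus^2}$ is managed by the $\eta$-penalty at infinity, which confines the relevant arguments to a compact set before $\eta\to 0$. Combining these ingredients with the corrector technique of \cite{ref7} and the half-relaxed-limit and Wentzell-comparison machinery of \cite{ref3,ref8}, one sends $\varepsilon\to 0$, $\kappa\to 0$, $\mu\to 0$, $\eta\to 0$ in this order, obtains the required contradiction with $M>0$, and, since $T$ was arbitrary, deduces $\underline v\le \bar v$ on $\overline{\Rplus^2}\times\R_+$.
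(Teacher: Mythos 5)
The paper does not actually prove Proposition~1; it explicitly states that the proof is omitted and can be adapted from \cite{ref7} (bounded-domain comparison for HJ equations with nonlinear Neumann-type BCs) and \cite{ref3,ref8}. Your sketch correctly identifies the standard machinery those references use — doubling of variables with a sublinear localization at infinity, a $\mu/(T-t)$ penalty, case analysis at the boundary, and a corrector that reduces the boundary alternative to the interior one. So the framework is right, but a few of your specifics are off in ways that matter.

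First, the localization term $\eta(y+y')$ is a poor choice for this particular boundary. At a boundary maximizer the test gradients carry $\phi_y = (y_\varepsilon-y'_\varepsilon)/\varepsilon + \eta$ and $\phi'_{y'} = (y_\varepsilon-y'_\varepsilon)/\varepsilon - \eta$, so in the case where both alternatives land on $\B$ the difference $\B[\phi]-\B[\phi']$ picks up $-(\phi_y - \phi'_{y'}) = -2\eta$, i.e.\ a term with the \emph{wrong} sign for a contradiction. You assert that the penalty "orients the $y$-gradient outward," but the $-2\eta$ it contributes to $\B[\phi]-\B[\phi']$ is favorable to the pair of inequalities $\B[\phi]\le 0\le\B[\phi']$, not adverse. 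A localization whose $y$-derivative vanishes on $\{y=0\}$ (e.g.\ $\eta\langle y\rangle$ with $\langle y\rangle=\sqrt{1+y^2}$, or $\eta y^2/(1+y)$) removes this obstruction. Second, the corrector is misdescribed: with $\psi(y)=\kappa(1-e^{-y/\kappa})$ one computes $\B[\psi]=-e^{-y/\kappa}<0$, so $\psi$ is a strict $\B$-\emph{subsolution}, not a supersolution; moreover, perturbing $\underline v \mapsto \underline v - \kappa\psi$ replaces the test function $\phi$ by $\phi-\kappa\psi$ and pushes $\B[\phi-\kappa\psi]=\B[\phi]+\kappa e^{-y/\kappa}$ \emph{up}, which forces the $\T$-branch of $\min\{\T,\B\}\le 0$ — the opposite of "forc[ing] the $\B$-branch." The correct statement of the device is that the corrector makes the boundary alternative for the perturbed subsolution strictly positive, so that at any boundary maximizer the interior inequality $\T\le 0$ must be active, and then the argument reduces to the interior case. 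Finally, you rely on the cancellation of $a\phi_x^2$ and $|\nabla\phi|^2$ across the two test functions, which requires $(x_\varepsilon-x'_\varepsilon)/\varepsilon$ to stay bounded. This is not automatic for merely USC/LSC sub/supersolutions; one needs (and can obtain, from the coercivity of the quadratic Hamiltonian together with the obstacle $v\geq 0$) an a priori Lipschitz bound on the subsolution before running the doubling argument, and your sketch should make this step explicit.
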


Following the approach in \cite{ref7}, we give the variational formula for the viscosity solution of (2.1). At first, we introduce an optimal control problem. For any \(p = (p_1, p_2) \in \mathbb{R}^2\), define
\[
H(p) = |p|^2 +c p_1 + 1, \quad B(p_1) = a p_1^2 + b p_1.
\]

Then, we give the Legendre transformation of \(H\) and \(B\), respectively,
\[
L(q) = \sup_{p \in \mathbb{R}^2} \left\{ q \cdot p - H(p) \right\} = \frac{|q|^2}{4} - \frac{c}{2}q_1 + \frac{c^2}{4} - 1,
\]

\[
G(q_1) = \sup_{p_1 \in \mathbb{R}} \left\{ q_1 p_1 - B(p_1) \right\} = \frac{(q_1 - b)^2}{4a}.
\]

Now, we introduce controls and stopping times. Let \(\gamma = (\gamma_1, \gamma_2) \in AC([0, t], \overline{\Rplus^2})\), \(\eta = (\eta_1, \eta_2) \in L^1([0, t], \overline{\Rplus^2})\), and \(l \in L^1([0, t], \mathbb{R})\). For \(\mathbf{x} \in \overline{\Rplus^2}\) and \(t > 0\), set following conditions:
\[
\begin{cases}
\gamma(0) = \mathbf{x}, \\
l(\tau) \geq 0 \text{ for a.e. } \tau \in [0, t], \\
l(\tau) = 0 \text{ if } \gamma_2(\tau) \neq 0 \text{ for a.e. } \tau \in [0, t],
\end{cases}
\tag{H1}
\]

and
\[
\begin{cases}
\text{the function} \quad \tau \mapsto F(\gamma, \eta, l)(\tau) := l(\tau) G\left( \frac{\eta_1 - \dot{\gamma}_1}{l} (\tau) \right) \quad \text{is integrable on } [0, t], \\
\dot{\gamma}(\tau) = \eta(\tau) \text{ if } l(\tau) = 0 \text{ for a.e. } \tau \in [0, t], \\
\eta_2(\tau) = \dot{\gamma}_2(\tau) - l(\tau) \text{ if } l(\tau) \neq 0 \text{ for a.e. } \tau \in [0, t].
\end{cases}
\tag{H2}
\]

Here, the expression \(l(\tau) G\left( \frac{\eta_1 - \dot{\gamma}_1}{l} (\tau) \right)\) in (H2) is defined only on \(\{\tau \in [0, t] : l(\tau) > 0\}\), but we understand that
\[
l(\tau) G\left( \frac{\eta_1 - \dot{\gamma}_1}{l} (\tau) \right) = 
\begin{cases}
\frac{\left(\eta_1 - \dot{\gamma}_1 - b l\right)^2 (\tau)}{4a l(\tau)} & \text{if } l(\tau) > 0, \\
\\
0 & \text{if } l(\tau) = 0.
\end{cases}
\]

Denote
\[
\mathbb{Y}^{\mathbf{x},t} := \left\{(\gamma, \eta, l) : (\gamma, \eta, l) \text{ satisfies } (H1) \text{ and } (H2)\right\}.
\]

An element \((\gamma, \eta, l)\) of \(\mathbb{Y}^{\mathbf{x},t}\) is a \textit{control triplet}.

A mapping \(\theta : L^2([0, \infty); \overline{\Rplus^2}) \mapsto [0, \infty)\) is a \textit{stopping time} if for every \(\gamma, \hat{\gamma} \in L^2([0, \infty); \overline{\Rplus^2}\) and some \(s \in [0, \infty)\), that \(\gamma(\tau) = \hat{\gamma}(\tau)\) for a.e. \(\tau \in [0, s]\) and \(\theta[\gamma] \leq s\) implies
\[
\theta[\gamma] = \theta[\hat{\gamma}].
\]

Let \(\Theta\) denote the set of all stopping times. For any \((x,t) \in \overline{\Rplus^2} \times \Rplus\), define the upper value function \(I : \overline{\Rplus^2} \times \Rplus \to \R \cup \{+\infty\}\) as
\[
I(x,t) = \sup_{\theta \in \Theta} \inf_{(\gamma, \eta, l) \in \mathbb{Y}^{\mathbf{x},t}} \left\{ \int_0^{t \wedge \theta [\gamma]} L(-\eta(\tau)) + F(\gamma, \eta, l)(\tau) d\tau + \Chi_{\{\theta [\gamma]=t\}} g_0(\gamma(t)) \right\}.
\]

One can immediately derive that \(I\) is non-negative and bounded. Moreover, it satisfies the dynamic programming principle: for any \(\sigma \in [0, t]\),
\[
I(x,t) = \sup_{\theta \in \Theta} \inf_{(\gamma, \eta, l) \in \mathbb{Y}^{\mathbf{x},t}} \left\{ \int_0^{\sigma \wedge \theta [\gamma]} L(-\eta(\tau)) + F(\gamma, \eta, l)(\tau) d\tau + \Chi_{\{\theta [\gamma] \geq \sigma\}} I(\gamma(\sigma), t - \sigma) \right\}.
\]

\begin{theorem}
The function \(I\) is continuous on \(\overline{\Rplus^2} \times \Rplus\) and a viscosity solution of (2.1). Moreover, \(\lim_{t \to 0^+} I(\mathbf{x},t) = g_0(\mathbf{x})\).
\end{theorem}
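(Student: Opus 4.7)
The plan is to verify, in turn, boundedness and continuity of $I$, then the viscosity sub- and super-solution conditions separately at interior points $\{y>0\}$ and at boundary points $\{y=0\}$, and finally to identify the initial trace. The common engine throughout is the stated dynamic programming principle (DPP) combined with carefully chosen control triplets $(\gamma,\eta,l)\in\mathbb{Y}^{\mathbf{x},t}$ and stopping times $\theta\in\Theta$. Boundedness $0\le I\le\|g_0\|_\infty+Ct$ follows by comparing against the stationary triplet $(\gamma\equiv\mathbf{x},\,\eta\equiv 0,\,l\equiv 0)$: taking $\theta\equiv 0$ collapses the cost to $0$ and gives $I\ge 0$, while for every stopping time the same stationary triplet caps the infimum by $\|g_0\|_\infty+Ct$. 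Continuity in $(\mathbf{x},t)$ is then a routine consequence of the DPP: an $\varepsilon$-optimizer at a nearby initial point is transplanted to $(\mathbf{x},t)$ at a quadratic cost controlled by the coercivity of $L$ and the Lipschitz constant of $g_0$, and the estimate is closed through the DPP.

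For the viscosity-solution property at an interior point, fix a test $\phi$ and suppose $I-\phi$ has a strict extremum at $(\mathbf{x},t)$ with $y>0$. For sufficiently small $\sigma>0$ any admissible $\gamma$ remains in $\{y>0\}$, so the structural conditions (H1)--(H2) force $l\equiv 0$ and $F\equiv 0$; the DPP then reduces to the classical Bellman relation for the Lagrangian--Hamiltonian pair $(L,H)$. A Taylor expansion of $\phi(\gamma(\sigma),t-\sigma)$ combined with the Fenchel identity $L(q)+H(p)\ge q\cdot p$---tight at $q=\nabla H(p)$---yields $\mathcal{T}[\phi]\le 0$ (sub-solution) or $\mathcal{T}[\phi]\ge 0$ (super-solution) after dividing by $\sigma$ and letting $\sigma\downarrow 0$. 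The obstacle term $\min\{\mathcal{T}[\phi],\phi\}$ is handled at the level of stopping times: for the sub-solution inequality one uses $\theta\equiv t$ in the regime $\underline{v}(\mathbf{x},t)>0$; for the super-solution inequality one uses $\theta\equiv 0$, which contributes $0$ and is compatible with the sign requirement $\bar{v}\ge 0$.

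The main obstacle is the boundary analysis at $\{y=0\}$, where the min/max between $\mathcal{T}[\phi]$ and $\mathcal{B}[\phi]$ must be extracted from the DPP. For the super-solution condition $\max\{\mathcal{T}[\phi](x,0_+,t),\mathcal{B}[\phi](x,0,t)\}\ge 0$, the idea is to split the admissible triplets in the DPP infimum into two regimes: those with $l\equiv 0$ that depart immediately into $\{y>0\}$---producing $\mathcal{T}[\phi]\ge 0$ by the interior Fenchel calculation---and those with $l>0$ that remain on the road---producing $\mathcal{B}[\phi]\ge 0$ via the second Legendre identity $G(q_1)+B(p_1)\ge q_1 p_1$ evaluated at $p_1=\phi_x(x,0,t)$. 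Since the DPP infimum is asymptotically attained by one regime or the other, at least one of the two inequalities survives the passage to the limit, forcing their maximum to be nonnegative. The sub-solution inequality is symmetric: each regime produces the corresponding inequality, and the supremum over $\theta$ selects the tighter one. The convention $l\,G((\eta_1-\dot\gamma_1)/l)=0$ at $l=0$, together with the constraint $l=0$ outside $\{\gamma_2=0\}$, is essential to make the decomposition $[0,\sigma]=\{l>0\}\cup\{l=0\}$ pass smoothly to the limit; care must also be taken that the road time $\int_0^\sigma l\,d\tau$ need not be $O(\sigma)$ uniformly in the triplet, which is the technical crux. Finally, $\lim_{t\to 0^+}I(\mathbf{x},t)=g_0(\mathbf{x})$ follows from $\theta\equiv t$ with $\gamma\equiv\mathbf{x}$ for the upper bound, and from the coercivity $L(-\eta)\ge \tfrac{1}{8}|\eta|^2-C$ together with Lipschitz continuity of $g_0$ for the lower bound: the quadratic running cost $|\gamma(t)-\mathbf{x}|^2/(4t)$ dominates $|g_0(\gamma(t))-g_0(\mathbf{x})|$ as $t\downarrow 0$.
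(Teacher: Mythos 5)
Your plan correctly identifies the toolbox (DPP, the two Legendre–Fenchel pairs $(L,H)$ and $(G,B)$, stopping-time choices for the obstacle), and the interior analysis and the upper trace bound $I(\mathbf{x},t)\le g_0(\mathbf{x})$ are fine. But the part you yourself flag as "the main obstacle" and "the technical crux" is where the sketch departs from a workable argument. The boundary sub-solution inequality is not obtained by splitting the controls into an $l\equiv 0$ class and an $l>0$ class and letting "the supremum over $\theta$ select the tighter one": the stopping time does not select between $\T[\phi]$ and $\B[\phi]$, and the two classes do not partition $\mathbb{Y}^{\mathbf{x},t}$, since a single admissible triplet may have $l>0$ on one subinterval and $l=0$ on another. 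What the paper actually does is argue by contradiction. It assumes both $\T[\phi]>\epsilon$ and $\B[\phi]>\epsilon$ near $(\bar x,0,\bar t)$, introduces the constrained dynamics $\dot\gamma=\eta-l\,\xi(\gamma_1)$ with $\xi(x)=(2a\phi_x(x,0,\bar t)+b,-1)$ tuned to saturate the boundary Legendre transform, and invokes a nontrivial existence result \cite[Prop.\ 4.3, Lem.\ 5.5]{ref9} to produce a triplet that nearly saturates Fenchel duality for $(L,H)$ (estimate (2.5)) together with the crucial uniform bound $\max\{|\dot\gamma(\tau)|,l(\tau)\}\le C|\eta(\tau)|$ (estimate (2.6)). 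That Lipschitz bound is exactly what answers your own worry that $\int_0^\sigma l\,d\tau$ need not be $O(\sigma)$: along the specially constructed trajectory it is, and without this the contradiction cannot be closed. The super-solution step has an analogous issue in your framing: the two Young inequalities are applied on the time-sets $\{\gamma_2(\tau)=0\}$ and $\{\gamma_2(\tau)\neq 0\}$ of a \emph{single} near-optimal trajectory and then added, rather than chosen according to which "regime asymptotically attains" the infimum.

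The trace lower bound also has a concrete flaw. On the road the running cost per unit time to move at tangential speed $v$ is not quadratic: minimizing $\frac{l^2}{4}+\frac{(v-bl)^2}{4al}$ over $l>0$ gives $l\sim v^{2/3}$ and cost $\sim v^{4/3}$. So "the quadratic running cost $|\gamma(t)-\mathbf{x}|^2/(4t)$ dominates the Lipschitz increment" is false as stated; the coercivity is genuinely weaker along the road, and one needs the $4/3$-exponent to see that the cost still blows up as $t\downarrow 0$ for fixed displacement. The paper sidesteps this bookkeeping entirely by choosing $C^1$ approximants $g_0^\epsilon$ of $g_0$ satisfying the one-sided boundary inequality $a(g^\epsilon_{0,x})^2+b\,g^\epsilon_{0,x}-g^\epsilon_{0,y}\le 0$, showing $\psi(\mathbf{x},t)=g_0^\epsilon(\mathbf{x})-C_\epsilon t$ is a sub-solution, and integrating $\frac{d}{d\tau}\psi(\gamma(\tau),t-\tau)$ along an arbitrary admissible trajectory with exactly the two Fenchel inequalities you already invoked. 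Adopting this test-function comparison is both cleaner and necessary: the "pure coercivity" route would still require you to justify the sharp coercivity exponent on the road, which your proposal does not do.
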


\begin{proof}
\textbf{Step 1.} For any \((\mathbf{x},t) \in \overline{\Rplus^2} \times \Rplus\), we first prove \(I\) is a viscosity sub-solution. Let \(I^*\) denote the upper semi-continuous envelope of \(I\):
\[
I^*(\mathbf{x},t) = \limsup_{(\mathbf{x'},t') \to (\mathbf{x},t)} I(\mathbf{x'},t').
\]
Assume \(I^* - \phi\) attains a strict maximum at \((\bar{x}, \bar{y}, \bar{t}) \in \overline{\Rplus^2} \times \Rplus\) with \(I^*(\bar{x}, \bar{y}, \bar{t}) > 0\). We must show:
\[
\T[\phi](\bar{x}, \bar{y}, \bar{t}) = \phi_t(\bar{x}, \bar{y}, \bar{t}) + |\nabla \phi(\bar{x}, \bar{y}, \bar{t})|^2 + c \phi_x(\bar{x}, \bar{y}, \bar{t}) + 1 \leq 0 \quad \text{if } \bar{y} \neq 0,
\]
and for \(\bar{y} = 0\):
\begin{align*}
&\min \left\{ \T[\phi](\bar{x}, 0_+, \bar{t}), \B[\phi](\bar{x}, 0, \bar{t}) \right\} = \\ 
&\min \{ \phi_t(\bar{x}, 0, \bar{t}) + |\nabla \phi(\bar{x}, 0_+, \bar{t})|^2 + c \phi_x(\bar{x}, 0_+, \bar{t}) + 1,\, a \phi_x^2(\bar{x}, 0, \bar{t}) - \phi_y(\bar{x}, 0_+, \bar{t}) + b \phi_x(\bar{x}, 0, \bar{t}) \left. \right\} \leq 0.
\end{align*}

Here, we only consider the case where \( \bar{y}=0 \); the other case can be treated similarly. 

Assume contrarily that \(\exists \epsilon > 0\) such that:
\[
\phi_t(\bar{x}, 0, \bar{t}) + |\nabla \phi(\bar{x}, 0_+, \bar{t})|^2 + c \phi_x(\bar{x}, 0_+, \bar{t}) + 1 > \epsilon \quad \text{and} \quad a \phi_x^2(\bar{x}, 0, \bar{t}) - \phi_y(\bar{x}, 0_+, \bar{t}) + b \phi_x(\bar{x}, 0, \bar{t}) > \epsilon.
\]

Without loss of generality, we may assume \( a \phi_x^2(\bar{x}, 0, \bar{t}) - \phi_y(\bar{x}, 0_+, \bar{t}) + b \phi_x(\bar{x}, 0, \bar{t}) > \epsilon / 2 \). 

Define \( \xi(x) = (2a \phi_x (x, 0, \bar{t}) + b, -1) \) and \quad \( g(x) = a \phi_x^2 (x, 0, \bar{t}) \). 

For any \( \epsilon_1 \in (0, \epsilon) \), let  
\[
U_{2\epsilon_1} := B_{2\epsilon_1} (\bar{x}, 0) \times [\bar{t} - 2\epsilon_1, \bar{t} + 2\epsilon_1].
\]

For sufficiently small \( \epsilon_1 \), assume \( \bar{t} - 2\epsilon_1 > 0 \). For all \( (\mathbf{x}, t) \in U_{2\epsilon_1} \cap (\overline{\R_+^2} \times \R_+) \):
\[
\begin{cases}
\phi_t(\mathbf{x}, t) + |\nabla \phi(\mathbf{x}, t)|^2 + c \phi_x(\mathbf{x}, t) + 1 \geq \epsilon_1, \\
\\
\xi(x) \cdot \nabla \phi (\mathbf{x}, t) - g(x) \geq 0,
\end{cases}
\tag{2.2}
\]

Since \( \phi \in C^1 (\overline{\R_+^2} \times \R_+) \), assume \( (I^* - \phi)(\bar{x}, 0, \bar{t}) = 0 \), \text{and} let \( m = -\max_{\partial U_{2\epsilon_1} \cap (\overline{\R_+^2} \times \R_+)} (I^* - \phi) \), so \( m > 0 \) and \( I \leq \phi - m \) on \( \partial U_{2\epsilon_1} \cap (\overline{\R_+^2} \times \R_+) \). Choose \( (\mathbf{x}^*, t^*) \in U_{\epsilon_1} \) such that  
\begin{align*}
(I - \phi)(x^*, t^*) > -m, \tag{2.3} \\
\quad I(x^*, t^*) > 0. \tag{2.4}
\end{align*}

Introduce auxiliary controls: assume \( (\gamma, \eta, l) \) satisfies (H1) and  
\[
\gamma(s) \in \overline{\R_+^2}, \quad \dot{\gamma}(s) = \eta(s) - l(s) \xi(\gamma_1(s)) \quad \text{for all } s \in [0, t^*]. 
\tag{\(\widetilde{H2}\)}
\]

Define  
\[
\widetilde{\mathbb{Y}}^{\mathbf{x}^*, t^*} := \left\{ (\gamma, \eta, l) : \gamma(0) = \mathbf{x}^*, \, (\gamma, \eta, l) \text{ satisfies (H1) and } (\widetilde{H2}) \right\}.
\]

One verifies \( \widetilde{\mathbb{Y}}^{\mathbf{x}^*, t^*} \subset \mathbb{Y}^{\mathbf{x}^*, t^*} \). By \cite[Proposition 4.3 and Lemma 5.5]{ref9}, \(\exists\) \( (\gamma, \eta, l) \in \widetilde{\mathbb{Y}}^{\mathbf{x}^*, t^*} \) such that for a.e. \( s \in [0, t^*] \):  
\[
|\nabla \phi (\gamma(s), t^* - s)|^2 + c \phi_x(\gamma(s), t^* - s) + \frac{|\eta(s)|^2}{4} + \frac{c}{2} \eta_1(s) + \frac{c^2}{4} \leq \epsilon_1 - \eta(s) \cdot \nabla \phi (\gamma(s), t^* - s), 
\tag{2.5}
\]

\[
\max\{|\dot{\gamma}(\tau)|, l(\tau)\} \leq C |\eta(\tau)| \quad \text{for a.e. } \tau \in [0, t^*].
\tag{2.6}
\]

Let \( \sigma = \min \{s \geq 0 : (\gamma(s), t^* - s) \in \partial U_{2\epsilon_1}\} \). Note \( \sigma \leq 3\epsilon_1 \) and \( (\gamma(s), t^* - s) \in U_{2\epsilon_1} \) for all \( 0 \leq s \leq \sigma \). 

Using (2.3) and the dynamic programming principle:  
\begin{align*}
\phi(\mathbf{x}^*, t^*) &< I(\mathbf{x}^*, t^*) + m \nonumber \\
&\leq \sup_{\theta \in \Theta} \left\{ \int_0^{\sigma \wedge \theta[\gamma]} \left( \frac{|\eta(s)|^2}{4} + \frac{c}{2} \eta_1(s) + \frac{c^2}{4} - 1 + \frac{(\eta_1 - \dot{\gamma}_1 - b l)^2(s)}{4a l(s)} \right) ds + \chi_{\theta[\gamma] \geq \sigma} I(\gamma(\sigma), t^* - \sigma) \right\} + m. \\
\tag{2.7}
\end{align*}

The supremum above requires consideration only when \( \theta[\gamma] > \sigma \) for sufficiently small \( \epsilon_1 \). If \( \theta[\gamma] < \sigma \) for sufficiently small \( \sigma \), (2.6) and \( I(\mathbf{x}^*, t^*) > 0 \) lead to a contradiction: 
\[
I(\mathbf{x}^*, t^*) \leq \sup_{\theta \in \Theta} \left\{ \int_0^{\sigma \wedge \theta |\gamma|} \left( \frac{|\eta(s)|^2}{4} + \frac{c}{2} \eta_1(s) + \frac{c^2}{4} - 1 + \frac{(\eta_1 - \dot{\gamma}_1 - b l)^2(s)}{4a l(s)} \right) ds \right\} \leq \sigma C(\eta) < \frac{I(\mathbf{x}^*, t^*)}{2}.
\]

And for \(\theta[\gamma] \geq \sigma\), using (2.7) and \((\widetilde{H2})\):
\begin{align*}
\phi(\mathbf{x}^*, t^*) <& \int_0^\sigma \left( \frac{|\eta(s)|^2}{4} + \frac{c}{2} \eta_1(s) + \frac{c^2}{4} - 1 + a l(s) \phi_x^2 (\gamma_1(s), 0, \bar{t}) \right) ds \\ 
\\
&+ I(\gamma(\sigma), t^* - \sigma) + m \\
\\
&\leq \int_0^\sigma \left( \frac{|\eta(s)|^2}{4} + \frac{c}{2} \eta_1(s) + \frac{c^2}{4} - 1 + l(s) g(\gamma_1(s)) \right) ds + \phi(\gamma(\sigma), t^* - \sigma).
\end{align*}

Thus,
\[
0 < \int_0^\sigma \left\{ \frac{|\eta(s)|^2}{4} + \frac{c}{2} \eta_1(s) + \frac{c^2}{4} - 1 + l(s) g(\gamma_1(s)) + \frac{d}{ds} \phi(\gamma(s), t^* - s) \right\} ds 
\]

\[
= \int_0^\sigma \left[ \frac{|\eta(s)|^2}{4} + \frac{c}{2} \eta_1(s) + \frac{c^2}{4} - 1 + l(s) g(\gamma_1(s)) + \nabla \phi(\gamma(s), t^* - s) \cdot \dot{\gamma}(s) - \phi_t (\gamma(s), t^* - s) \right] ds 
\]

\begin{align*}
= \int_0^\sigma & \frac{|\eta(s)|^2}{4} + \frac{c}{2} \eta_1(s) + \frac{c^2}{4} - 1 + l(s) g(\gamma_1(s)) \\
& + \nabla \phi(\gamma(s), t^* - s) \cdot [\eta(s) - l(s) \xi(\gamma_1(s))] - \phi_t (\gamma(s), t^* - s) ds.
\end{align*}

By (2.2) and (2.5):
\begin{align*}
0 < \int_0^\sigma &l(s)\left[g(\gamma_1(s)) - \nabla \phi(\gamma(s), t^* - s) \cdot \xi(\gamma_1(s))\right] \\ 
\\ 
&+ \epsilon_1 - \left(|\nabla \phi(\gamma(s), t^* - s)|^2 + c \phi_x(\gamma(s), t^* - s) + 1 + \phi_t (\gamma(s), t^* - s)\right) \, ds \leq 0.
\end{align*}

This contradiction confirms that \(I\) is a viscosity sub-solution.

\medskip

\textbf{Step 2.} We now prove \(I\) is a viscosity super-solution. 

Let \(I_\ast\) denote the lower semi-continuous envelope:
\[
I_\ast(\mathbf{x}, t) = \liminf_{(\mathbf{x}', t') \to (\mathbf{x}, t)} I(\mathbf{x}', t').
\]

For \(\phi \in C^1 (\overline{\R_+^2} \times \R_+) \), assume \(I_\ast - \phi\) attains a strict minimum at \((\bar{x}, \bar{y}, \bar{t}) \in \overline{\Rplus^2} \times \R_+\). Consider \(\bar{y} = 0\). We must show:
\[
\max \left\{ \phi_t(\bar{x}, 0, \bar{t}) + |\nabla \phi(\bar{x}, 0_+, \bar{t})|^2 + c \phi_x(\bar{x}, 0_+, \bar{t}) + 1,\, a \phi_x^2(\bar{x}, 0, \bar{t}) - \phi_y(\bar{x}, 0_+, \bar{t}) + b \phi_x(\bar{x}, 0, \bar{t}) \right\} \geq 0.
\]

Suppose contrarily that:
\[
\begin{cases}
\phi_t(\bar{x}, 0, \bar{t}) + |\nabla \phi(\bar{x}, 0_+, \bar{t})|^2 + c \phi_x(\bar{x}, 0_+, \bar{t}) + 1 < 0, \\
\\
a \phi_x^2(\bar{x}, 0, \bar{t}) - \phi_y(\bar{x}, 0_+, \bar{t}) + b \phi_x(\bar{x}, 0, \bar{t}) < 0.
\end{cases}
\]

Hence, there exists \(\epsilon > 0\) such that for all \((\mathbf{x}, t) \in U_{2\epsilon} \cap (\overline{\R_+^2} \times \R_+)\):
\[
\phi_t(\mathbf{x}, t) + |\nabla \phi(\mathbf{x}, t)|^2 + c \phi_x(\mathbf{x}, t) + 1 < 0 \quad \text{and} \quad a \phi_x^2(\mathbf{x}, \bar{t}) - \phi_y(\mathbf{x}, \bar{t}) + b \phi_x(\mathbf{x}, \bar{t}) < 0.
\tag{2.8}
\]

Assume \(\bar{t} - 2\epsilon > 0\) and \((I_\ast - \phi)(\bar{x}, 0, \bar{t}) = 0\).

Define:
\[
m := \min_{\partial U_{2\epsilon}} (I_\ast - \phi) > 0.
\]

Choose \((\mathbf{x}^\ast, t^\ast) \in U_\epsilon\) such that \((I - \phi)(\mathbf{x}^\ast, t^\ast) < m\). By definition of \(I\), select \(\theta \equiv t^\ast\) and \((\gamma, \eta, l) \in \mathbb{Y}^*\) satisfying:  
\[
I(\mathbf{x}^\ast, t^\ast) + m \geq \int_0^{t^\ast} \left( \frac{|\eta(s)|^2}{4} + \frac{c}{2} \eta_1(s) + \frac{c^2}{4} - 1 + \frac{(\eta_1 - \dot{\gamma}_1 - b l)^2(s)}{4a l(s)} \right) ds + g_0(\gamma(t^\ast)).
\]

Let \(\sigma = \min\{s \geq 0: (\gamma(s), t^\ast - s) \in \partial U_{2\epsilon}\}\). 

Then:  
\[
\phi(\mathbf{x}^\ast, t^\ast) + m > \int_0^{\sigma} \left( \frac{|\eta(s)|^2}{4} + \frac{c}{2} \eta_1(s) + \frac{c^2}{4} - 1 + \frac{(\eta_1 - \dot{\gamma}_1 -b l)^2(s)}{4a l(s)} \right) ds + I(\gamma(\sigma), t^\ast - \sigma)
\]

\[
\geq \int_0^{\sigma} \left( \frac{|\eta(s)|^2}{4} + \frac{c}{2} \eta_1(s) + \frac{c^2}{4} - 1 + \frac{(\eta_1 - \dot{\gamma}_1 - b l)^2(s)}{4a l(s)} \right) ds + \phi(\gamma(\sigma), t^\ast - \sigma) + m
\]

\[
\geq \int_0^{\sigma} \left( \frac{|\eta(s)|^2}{4} + \frac{c}{2} \eta_1(s) + \frac{c^2}{4} - 1 + \frac{(\eta_1 - \dot{\gamma}_1 - b l)^2(s)}{4a l(s)} + \frac{d}{ds} \phi(\gamma(s), t - s) \right) ds + \phi(\mathbf{x}^*, t^\ast) + m
\]

\begin{align*}
&\geq \int_0^{\sigma} \left( \frac{|\eta(s)|^2}{4} + \frac{c}{2} \eta_1(s) + \frac{c^2}{4} - 1 + \frac{(\eta_1 - \dot{\gamma}_1 - b l)^2(s)}{4a l(s)} \right. \\
\\
&\quad \left. + \nabla \phi(\gamma(s), t^* - s) \cdot \dot{\gamma}(s) - \phi_t (\gamma(s), t^* - s) \right) ds + \phi(\mathbf{x}^*, t^\ast) + m.
\end{align*}

By (H2) and Young's inequality, for almost every \(s \in \{s: \gamma_2(s) \neq 0\}\):  
\[
\frac{|\eta(s)|^2}{4} + \frac{c}{2} \eta_1(s) + \frac{c^2}{4} + \frac{(\eta_1 - \dot{\gamma}_1 - b l)^2(s)}{4a l(s)} \geq -\dot{\gamma}(s) \cdot \nabla \phi(\gamma(s), t^\ast - s) - |\nabla \phi(\gamma(s), t^\ast - s)|^2 - c \phi_x(\gamma(s), t^\ast - s) \\
\tag{2.9}
\]

For \( a.e. s \in \{s: \gamma_2(s) = 0\}\) (where \(\dot{\gamma}_2(s) = 0\), \(\eta_2(s) = - l(s)\)):  
\[
\frac{|\eta(s)|^2}{4} + \frac{c}{2} \eta_1(s) + \frac{c^2}{4} + \frac{(\eta_1 - \dot{\gamma}_1 - b l)^2(s)}{4a l(s)} = \frac{\eta^2_1(s) + l^2(s)}{4} + \frac{c}{2} \eta_1(s) + \frac{c^2}{4} + \frac{(\eta_1 - \dot{\gamma}_1 - b l)^2(s)}{4a l(s)}
\]

\[
\geq -\dot{\gamma}_1(s) p_1(s) - |\mathbf{p}(s)|^2 - c p_1(s) - l(s)(ap_1^2(s) + b p_1(s) - p_2(s)). 
\tag{2.10}
\]

where \( \mathbf{p} = (p_1, p_2) := (\phi_x(\gamma(s), t^* - s), \phi_y(\gamma_1(s), 0+, t^* - s)) \). 

Combining (2.8), (2.9), and (2.10):  
\begin{align*}
0 &> \int_0^{\sigma} \left( \frac{|\eta(s)|^2}{4} + \frac{c}{2} \eta_1(s) + \frac{c^2}{4} - 1 + \frac{(\eta_1 - \dot{\gamma}_1 - b l)^2(s)}{4a l(s)} \right. \\
\\
&\quad \left. + \nabla \phi(\gamma(s), t^* - s) \cdot \dot{\gamma}(s) - \phi_t (\gamma(s), t^* - s) \right) ds
\end{align*}
\[
\geq \int_{0}^{\sigma} -\left(\phi_t(\gamma(s), t^\ast - s) + |\nabla \phi(\gamma(s), t^\ast - s)|^2 + c \phi_x(\gamma(s), t^* - s) + 1\right) - l(s)\left(ap_1^2(s) + b p_1(s) - p_2(s)\right) ds \geq 0.
\]

This contradiction confirms \(I\) is a viscosity super-solution.

\medskip

\textbf{Step 3.} To prove \(I\) is continuous on \(\overline{\Rplus^2} \times [0, \infty)\), by Proposition 2.2, we need to show: 
\[
I^\ast(\mathbf{x}, 0) \leq I_\ast(\mathbf{x}, 0) \quad \forall \mathbf{x} \in \overline{\Rplus^2}.
\]

Following \cite[Theorem 4.2]{ref7}, select \(g_0^\epsilon \in C^1(\overline{\Rplus^2}) \) satisfying:  
\[
a(g_{0,x}^\epsilon)^2(x, 0) + b\, g_{0,x}^\epsilon(x, 0) - g_{0,y}^\epsilon(x, 0+) \leq 0, \quad \text{and} \quad |g_0(\mathbf{x}) - g_0^\epsilon(\mathbf{x})| \leq \epsilon \quad \forall x \in \overline{\Rplus^2}.
\]

Choose a sufficiently large constant \(C_\epsilon > 0\) such that \(\psi(\mathbf{x}, t) := g_0^\epsilon(\mathbf{x}) - C_\epsilon t\) satisfies:  
\[
\psi_t(\mathbf{x}, t) + |\nabla \psi(\mathbf{x}, t)|^2 + c \psi_x(\mathbf{x}, t) + 1 \leq 0 \quad \text{on } \overline{\Rplus^2}.
\]

For any \((\gamma, \eta, l) \in Y^{x, t}\):  
\[
\psi(\gamma(t), 0) - \psi(\mathbf{x}, t) = \int_{0}^{t} \nabla \psi(\gamma(\tau), t - \tau) \cdot \dot{\gamma}(\tau) - \psi_t(\gamma(\tau), t - \tau) d\tau.
\]

For \(a.e. \,\tau \in \{\gamma_2(\tau) \neq 0\}\):  
\begin{align*}
\nabla \psi(\gamma(\tau), t - \tau) \cdot \dot{\gamma}(\tau) - \psi_t(\gamma(\tau), t - \tau) &\geq \nabla \psi(\gamma(\tau), t - \tau) \cdot \dot{\gamma}(\tau) + |\nabla \psi(\gamma(\tau), t - \tau)|^2 + c \psi_x(\gamma(\tau), t - \tau) + 1 \\
\\
&\geq - L(-\eta(\tau)) - F(\gamma, \eta, l)(\tau). 
\end{align*}

For \(a.e. \,\tau \in \{\gamma_2(\tau) = 0\}\) \text{\, and \,} using \(a \psi_x^2(x, 0, t) + b \,\psi_x(x, 0, t) - \psi_y(x, 0_+, t) \leq 0\;\):
\begin{align*}
& \quad \psi_x(\gamma(\tau), t - \tau) \dot{\gamma}_1(\tau) - \psi_t(\gamma(\tau), t - \tau) \\
\\
&\geq \psi_x(\gamma(\tau), t - \tau) \dot{\gamma}_1(\tau) + |\nabla \psi(\gamma(\tau), t - \tau)|^2 + c \psi_x(\gamma(\tau), t - \tau) + 1 \\
\\
&\geq \psi_x(\gamma(\tau), t - \tau) \dot{\gamma}_1(\tau) + |\nabla \psi(\gamma(\tau), t - \tau)|^2 + c \psi_x(\gamma(\tau), t - \tau) + 1 \\
\\
& \quad + l(\tau)(a \psi_x^2(\gamma(\tau), t - \tau) + b \psi_x(\gamma(\tau), t - \tau) - \psi_y(\gamma_1(\tau), 0_+, t-\tau)) \\
\\
&\geq - L(-\eta(\tau)) - F(\gamma, \eta, l)(\tau).
\end{align*}

Thus:  
\[
\psi(\mathbf{x},t) \leq \int_0^t  L(-\eta(\tau)) + F(\gamma, \eta, l)(\tau) d\tau + g_0^\epsilon(\mathbf{x}).
\]

As \((\gamma, \eta, l)\) is arbitrary, we have: 
\[
\psi(\mathbf{x},t) \leq \inf_{(\gamma, \eta, l) \in \mathbb{Y}^{\mathbf{x},t}}  \int_0^t L(-\eta(\tau)) + F(\gamma, \eta, l)(\tau) d\tau + g_0^\epsilon(\mathbf{x}) \leq I(\mathbf{x},t),
\]

Hence \(g_0(\mathbf{x}) - 2\epsilon \leq I_\ast(\mathbf{x},0)\) on \(\overline{\Rplus^2}\). Letting \(\epsilon \to 0\), we get \(g_0(\mathbf{x}) \leq I_\ast(\mathbf{x},0)\) on \(\overline{\Rplus^2}\).

Conversely, take \((\gamma, \eta, l) = (\mathbf{x}, 0, 0) \in \mathbb{Y}^{x,t}\):  
\[
I(\mathbf{x},t) \leq \sup_{\theta \in \Theta}  \int_0^{\theta[\gamma]} L(0) + \Chi_{\{\theta[\gamma] = t\}} g_0(\mathbf{x}) \leq g_0(\mathbf{x}) \quad \forall (\mathbf{x}, t) \in \overline{\Rplus^2} \times \Rplus
\]

Thus \(I^*(\mathbf{x},0) \leq g_0(\mathbf{x})\). 

Combining both results yields 
\[
I^\ast(\mathbf{x}, 0) \leq I_\ast(\mathbf{x}, 0) \quad \forall x \in \overline{\Rplus^2}.
\]

\end{proof}

\section{ESTIMATES AND LIMITS OF THE PHASE FUNCTION}

\subsection{Convergence of the Phase Function}

In this section, we will show that \(v^{\epsilon}\) converges to the viscosity solution of (1.1).

Initially, certain uniform estimates pertaining to \( u^\varepsilon \) and \( v^\varepsilon \) will be obtained. Analogous to the demonstration of \cite[Lemma 1.2]{ref2}, the subsequent lemma is easily derivable through application of the comparison principle \cite[Proposition 4.2]{ref1}; ergo, we omit a detailed exposition herein.

\begin{lemma}
There subsists a constant \( C \), independent of \( \varepsilon > 0 \), such that  
\[
0 < u^\varepsilon \leq C, \quad \forall (x, y, t) \in \overline{\Rplus^2} \times \mathbb{R}_+.
\]

Furthermore,  
\[
\limsup_{\varepsilon \to 0} u^\varepsilon \leq 1
\]

locally uniformly in \( \overline{\Rplus^2} \times \mathbb{R}_+ \).
\end{lemma}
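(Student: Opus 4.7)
The plan is to apply the comparison principle \cite[Proposition 4.2]{ref1} for the field--road system with two explicit, spatially homogeneous barriers, mirroring the strategy of \cite[Lemma 1.2]{ref2}.

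First, for the uniform upper bound I would set $C := \max\{1,\|u_0\|_\infty\}$ and take the constant barrier $\bar u \equiv C$. In the bulk, $\bar u_t - \Delta \bar u + c\bar u_x = 0 \geq C(1-C) = \bar u(1-\bar u)$ since $C \geq 1$ forces the logistic term to be non-positive. On $y=0$ the Wentzell operator $a\bar u_{xx} - b\bar u_x + \bar u_y$ vanishes identically on constants, so it is satisfied with equality. Hence $\bar u$ is a classical supersolution with $\bar u(\cdot,0) \geq u_0$, and the comparison principle yields $u^\varepsilon \leq C$ uniformly in $\varepsilon$. For positivity, the constant $0$ is likewise a classical subsolution (and satisfies the Wentzell condition with equality), so comparison gives $u^\varepsilon \geq 0$; applying the parabolic strong maximum principle together with the hypothesis $u_0 > 0$ then upgrades this to $u^\varepsilon > 0$.

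Second, to establish $\limsup u^\varepsilon \leq 1$ locally uniformly I would compare $u^\varepsilon$ with the spatially homogeneous logistic profile. Let $w^\varepsilon(t)$ solve the ODE inherited from the rescaled reaction term produced by the hyperbolic scaling,
\begin{equation*}
\dot w^\varepsilon(t) = \tfrac{1}{\varepsilon}\, w^\varepsilon(1-w^\varepsilon), \qquad w^\varepsilon(0) = C.
\end{equation*}
Being independent of $(x,y)$, $w^\varepsilon$ annihilates every spatial derivative and is therefore itself an exact solution of the rescaled system (bulk plus Wentzell), so by comparison $u^\varepsilon(x,y,t) \leq w^\varepsilon(t)$. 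Explicit integration gives
\begin{equation*}
w^\varepsilon(t) = \frac{C}{C + (1-C)e^{-t/\varepsilon}},
\end{equation*}
which decreases monotonically to $1$ as $\varepsilon \to 0^+$, uniformly on every set $\{t \geq t_0\}$ with $t_0 > 0$. This delivers the claimed locally uniform $\limsup$ estimate on $\overline{\Rplus^2}\times \Rplus$.

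The only delicate point is the bookkeeping: verifying that spatially constant barriers qualify as admissible sub/supersolutions against the Wentzell operator within the viscosity framework of \cite{ref1}, and matching the precise $\varepsilon$-power in the ODE to whatever form the hyperbolic scaling imposes on $u^\varepsilon$. Both checks are immediate for constant and $t$-only profiles, since every spatial derivative vanishes; the genuine analytic difficulty has been absorbed into \cite[Proposition 4.2]{ref1}, making the lemma a direct consequence of the comparison principle together with the explicit logistic ODE computation above.
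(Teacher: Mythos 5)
Your proposal is correct and follows exactly the route the paper indicates but leaves unspelled: the paper states that Lemma 1 is "analogous to [Lemma 1.2]{ref2}" and "easily derivable through application of the comparison principle [Proposition 4.2]{ref1}," and your argument — spatially constant barrier $\bar u \equiv \max\{1,\|u_0\|_\infty\}$ for the uniform bound, then the spatially homogeneous logistic profile $w^\varepsilon(t)$ solving $\dot w^\varepsilon=\varepsilon^{-1}w^\varepsilon(1-w^\varepsilon)$ for the $\limsup$ — is precisely the Evans–Souganidis comparison argument instantiated for the field–road system, with the correct observation that constant and $t$-only barriers trivially satisfy the Wentzell condition.
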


\begin{lemma}
For any compact subset \( K \subset \overline{\Rplus^2} \times \mathbb{R}_+ \), there subsists \( C(K) \), independent of \( \varepsilon \), such that  
\[
\sup_{K} |v^\varepsilon| \leq C(K).
\]
\end{lemma}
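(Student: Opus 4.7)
The plan is to derive $\varepsilon$-uniform two-sided bounds on $v^\varepsilon = -\varepsilon\log u^\varepsilon$. Since $\mathbb{R}_+$ excludes $0$ in the paper's convention, the compact set $K$ stays bounded away from the initial time, so parabolic smoothing is available. The lower bound on $v^\varepsilon$ is immediate from Lemma 1: the uniform estimate $u^\varepsilon \leq C$ on all of $\overline{\Rplus^2}\times\mathbb{R}_+$ gives
\[
v^\varepsilon = -\varepsilon\log u^\varepsilon \,\geq\, -\varepsilon\log C,
\]
which is bounded independently of $\varepsilon$ (in fact tends to $0$) on the entire domain, without using compactness of $K$. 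The substantive work is therefore the upper bound on $v^\varepsilon$, equivalently a Freidlin-type quantitative positivity estimate $u^\varepsilon(x,y,t) \geq \exp(-C(K)/\varepsilon)$ on $K$.

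For this upper bound I would construct an explicit sub-solution of the rescaled Fisher--KPP field--road system (1.2) of WKB form
\[
\underline u^\varepsilon(x,y,t) \,=\, \delta\exp\!\bigl(-\Psi(x,y,t)/\varepsilon\bigr),
\]
with $\delta>0$ small and $\Psi\in C^2(\overline{\Rplus^2}\times[0,\infty))$ to be chosen. Since $0 \leq u^\varepsilon \leq 1$ makes the reaction $u(1-u)$ non-negative, it suffices that $\underline u^\varepsilon$ be a sub-solution of the \emph{linear} field--road system, which via the WKB substitution becomes the pair of first-order inequalities
\[
\Psi_t + |\nabla\Psi|^2 + c\Psi_x \,\geq\, \varepsilon\,\Delta\Psi \quad \text{in } \mathbb{R}_+^2, \qquad a\Psi_x^2 + b\Psi_x - \Psi_y \,\geq\, \varepsilon\,a\,\Psi_{xx} \quad \text{on } \{y=0\},
\]
together with the initial comparison $\underline u^\varepsilon(\cdot,0) \leq u^\varepsilon(\cdot,0)$, which can be arranged by taking $\delta$ small and $\Psi$ large away from the initial concentration. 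A natural shifted-Gaussian candidate
\[
\Psi(x,y,t) \,=\, \frac{(x-x_0)^2}{4(t+t_1)} + \mu(y+y_0) + \kappa(t+t_1)
\]
has smooth derivatives that are uniformly bounded on $K$, leaving tunable parameters $(x_0,y_0,t_1,\mu,\kappa)$ to match initial data and satisfy both differential inequalities. Once such a $\Psi$ is in hand, the comparison principle for the Fisher--KPP field--road system \cite[Proposition 4.2]{ref1} yields $u^\varepsilon \geq \underline u^\varepsilon$, and therefore $v^\varepsilon \leq \Psi + \varepsilon|\log\delta| \leq C(K)$ on $K$.

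The main obstacle is the Wentzell-type boundary condition, which is \emph{quadratic} rather than linear in $\Psi_x$ and so does not admit the standard Robin-type Gaussian barrier. The quadratic $a p_1^2 + b p_1$ attains minimum $-b^2/(4a)$ at $p_1 = -b/(2a)$, and the boundary inequality forces a delicate balance between $-\Psi_y$ (controlled by $\mu$) and the admissible range of $\Psi_x$-values on the projection of $K$ onto $\{y=0\}$. Shifting the virtual source $(x_0,y_0)$ so that this $\Psi_x$-range lies where $a\Psi_x^2+b\Psi_x$ is sufficiently positive, and then choosing $\mu$ small accordingly, resolves the obstruction; the bulk inequality, the lower-order $\varepsilon\,\Delta\Psi$ correction, and the initial ordering are then routine verifications.
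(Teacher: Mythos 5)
The paper does not provide its own proof of this lemma (it only cites Evans--Souganidis \cite[Lemma~1.2]{ref2} and the comparison principle of Li--Wang \cite[Proposition~4.2]{ref1}), so I assess your proposal on its own terms. Your overall strategy --- bound $v^\varepsilon$ from below trivially via Lemma~1 and from above by constructing a WKB sub-solution $\underline u^\varepsilon=\delta e^{-\Psi/\varepsilon}$ of the rescaled field--road system, then invoke parabolic comparison --- is the right one, and your WKB reduction of the bulk and Wentzell inequalities is correct. However, the proposed fix for the boundary obstruction does not work, and this is a genuine gap.

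The issue is that the boundary inequality $a\Psi_x^2+b\Psi_x-\Psi_y\ge \varepsilon a\Psi_{xx}$ must hold at \emph{every} $x\in\mathbb{R}$ on $\{y=0\}$, not merely on the projection of $K$; the parabolic comparison principle sees the whole domain. With your ansatz, $\Psi_x(x,0,t)=(x-x_0)/(2(t+t_1))$ ranges over all of $\mathbb{R}$ as $x$ does, so translating $x_0$ does not remove the value $\Psi_x=-b/(2a)$ where $a\Psi_x^2+b\Psi_x$ reaches its minimum $-b^2/(4a)<0$. Since $\Psi_y\equiv\mu$, the inequality then forces $\mu\le -b^2/(4a)-\varepsilon a\Psi_{xx}<0$; but $\mu<0$ makes $\Psi\to-\infty$ and hence $\underline u^\varepsilon\to\infty$ as $y\to\infty$, which is incompatible both with the uniform bound $u^\varepsilon\le C$ and with the initial ordering. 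Thus the linear-in-$y$ term cannot be salvaged by shifting the virtual source. The repair is to make the $y$-dependence quadratic: with
\[
\Psi(x,y,t)=\frac{(x-ct-x_0)^2+(y-y_0)^2}{4(t+t_1)}+\kappa(t+t_1),\qquad y_0>0,
\]
one has $\Psi_y(x,0,t)=-y_0/(2(t+t_1))<0$ uniformly in $x$, and the boundary inequality reduces to $-\,b^2/(4a)+(y_0-\varepsilon a)/(2(t+t_1))\ge 0$, satisfied for $y_0\ge \varepsilon a + b^2(\max_K t + t_1)/(2a)$, a choice depending on $K$ but uniform in $\varepsilon\le1$; the drift shift $x\mapsto x-ct$ also makes the bulk inequality $\Psi_t+|\nabla\Psi|^2+c\Psi_x-\varepsilon\Delta\Psi=\kappa-\varepsilon/(t+t_1)$ trivially nonnegative for $\kappa\ge \varepsilon/t_1$. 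You should also be explicit about the initial comparison: since $\underline u^\varepsilon>0$ everywhere while $u^\varepsilon(\cdot,0)$ is concentrated near the origin, the ordering cannot hold at $t=0$; one either restarts the comparison from a small time $s_0>0$ where $u^\varepsilon(\cdot,s_0)>0$ by the strong maximum principle, or uses the truncated barrier $\max\{\underline u^\varepsilon-1,\,0\}$ (a standard device in Evans--Souganidis). These points are not cosmetic; without them the argument does not close.
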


Having established the \( C_{\text{loc}} \) bounds, we define the \textbf{half-relaxed limits}:
\[
v^*(\mathbf{x},t) = \limsup_{\substack{\varepsilon \to 0+ \\ (\mathbf{x}',t') \to (\mathbf{x},t)}} v^\varepsilon(\mathbf{x}',t'), \quad v_*(\mathbf{x},t) = \liminf_{\substack{\varepsilon \to 0+ \\ (\mathbf{x}',t') \to (\mathbf{x},t)}} v^\varepsilon(\mathbf{x}',t').
\]

Observe that
\begin{itemize}
\item \( v^* \) is bounded and upper semi-continuous.
\item \( v_* \) is bounded and lower semi-continuous.
\item \( v^* \geq v_* \geq 0 \) by Lemma 1.
\end{itemize}

We shall demonstrate \( v^* = v_* = v \), where \( v \) is defined via the variational formula:
\[
v(\mathbf{x},t) = \sup_{\theta \in \Theta} \inf_{(\gamma, \eta, l) \in \mathbb{Y}^{\mathbf{x},t}} \left\{ \int_0^{t \wedge \theta[\gamma]} L(-\eta(\tau)) + F(\gamma, \eta, l)(\tau) d\tau \,\bigg|\, \gamma(t) = (0,0) \right\}.
\]

\begin{theorem}

The half-relaxed limits \( v^* \) and \( v_* \) satisfy:
\begin{enumerate}
\item \( v^* \) is a viscosity sub-solution of (1.1) on \( \overline{\Rplus^2} \times \R_+ \).
\item \( v_* \) is a viscosity super-solution of (1.1) on \( \overline{\Rplus^2} \times \R_+ \).
\item \( v^*(\mathbf{x},t) \leq v(\mathbf{x},t) \leq v_*(\mathbf{x},t) \) for all \( (\mathbf{x},t) \in \overline{\Rplus^2} \times (0,\infty) \).
\end{enumerate}

Consequently, \( v \) is the unique viscosity solution of (1.1).

\end{theorem}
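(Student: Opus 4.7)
The plan is to combine Barles--Perthame half-relaxed-limit techniques with the optimal control representation of Theorem~1. After substitution of the WKB ansatz $u^\varepsilon=\exp(-v^\varepsilon/\varepsilon)$ into the $\varepsilon$-rescaled version of (1.2), the phase $v^\varepsilon$ satisfies, in the viscosity sense, an interior equation of the form $v^\varepsilon_t+|\nabla v^\varepsilon|^2+c\,v^\varepsilon_x+1-u^\varepsilon=\varepsilon\,\Delta v^\varepsilon$ together with a rescaled Wentzell relation $a(v^\varepsilon_x)^2-v^\varepsilon_y+b\,v^\varepsilon_x = O(\varepsilon)$ on $\{y=0\}$. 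Lemmas~1 and~2 supply the $L^\infty_\loc$ bounds that make $v^*$ and $v_*$ well defined, bounded and non-negative.

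For item (1), suppose $v^*-\phi$ attains a strict maximum at $(\bar x,\bar y,\bar t)$ with $v^*(\bar x,\bar y,\bar t)>0$. A standard perturbation produces $(\x^\varepsilon,t^\varepsilon)\to(\bar x,\bar y,\bar t)$ at which $v^\varepsilon-\phi$ attains a local maximum; the positivity forces $u^\varepsilon(\x^\varepsilon,t^\varepsilon)=e^{-v^\varepsilon/\varepsilon}\to 0$, so the reaction drops out in the limit. For $\bar y\neq 0$, passing to the limit in the interior equation gives $\T[\phi](\bar x,\bar y,\bar t)\le 0$. For $\bar y=0$ I would argue by contradiction as in Step~1 of Theorem~1: if both $\T[\phi](\bar x,0_+,\bar t)>0$ and $\B[\phi](\bar x,0,\bar t)>0$, the auxiliary triplets built from the deflection field $\xi(x)=(2a\phi_x+b,-1)$ violate the dynamic programming principle of the approximate control problem. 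Item (2) is the mirror argument for $v_*-\phi$ attaining a strict minimum: Young's inequality together with (H2), exactly as in Step~2 of Theorem~1, turns the hypothesis $\max\{\T[\phi],\B[\phi]\}<0$ into a contradiction with the verification formula for $v^\varepsilon$.

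For item (3), the plan is to identify $v$ with the value function of Theorem~1 corresponding to the degenerate initial datum $g_0\equiv +\infty$ on $\overline{\Rplus^2}\setminus\{(0,0)\}$ with $g_0(0,0)=0$; admissibility then enforces the endpoint constraint $\gamma(t)=(0,0)$ and reproduces the stated variational formula. To prove $v^*\le v$, fix $(\gamma,\eta,l)\in\mathbb Y^{\x,t}$ with $\gamma(t)=(0,0)$, integrate $\frac{d}{ds}v^\varepsilon(\gamma(s),t-s)$ along the trajectory, and use the equation for $v^\varepsilon$ together with $u^\varepsilon\ge 0$ to bound $v^\varepsilon(\x,t)$ above by $\int_0^t L(-\eta)+F(\gamma,\eta,l)\,\diff\tau+o(1)$; taking $\limsup_{\varepsilon\to 0}$ and then the infimum over triplets gives $v^*\le v$. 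For $v\le v_*$ I would select, for each $\varepsilon$, a near-optimal triplet $(\gamma^\varepsilon,\eta^\varepsilon,l^\varepsilon)$ for $v^\varepsilon$ through the approximate dynamic programming principle, extract a subsequential limit using the uniform bounds of Lemmas~1--2, and pass to the $\liminf$ in the action. Combining (1), (2) and (3) with the trivial $v^*\ge v_*$ yields $v^*=v=v_*$, and Proposition~2.2 then delivers uniqueness.

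The main obstacle is the lower bound $v\le v_*$ near the road $\{y=0\}$: the control $l$ and the component $\eta_2=\dot\gamma_2-l$ may concentrate on $\{\gamma_2=0\}$, and trajectories must terminate at the singular point $(0,0)$ where the initial datum is infinite. Ensuring that the limiting triplet still belongs to $\mathbb Y^{\x,t}$, and that the road action $F(\gamma,\eta,l)$ passes to the limit in a lower-semicontinuous manner, will require the compactness machinery of \cite{ref9} combined with boundary estimates tailored to the Wentzell operator $\B$.
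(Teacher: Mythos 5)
Your general framework (Barles--Perthame half-relaxed limits, recognising that $v^\varepsilon$ solves a viscous Hamilton--Jacobi equation with an $O(\varepsilon)$ rescaled Wentzell condition, identifying $v$ with the control value function with endpoint constraint) is aligned with the paper. However, two of your steps contain genuine gaps that the paper's argument is specifically designed to avoid.

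First, for item (1) at $\bar y=0$ you propose to run the contradiction argument from Step~1 of Theorem~1, using the deflection field $\xi$ and the dynamic programming principle. That argument works for the value function $I$ because $I$ \emph{is} the value of a deterministic control problem and therefore satisfies the DPP. But $v^\varepsilon=-\varepsilon\log u^\varepsilon$ is the phase of a solution of a viscous parabolic system, not a control value function, and it has no DPP available. The paper instead exploits the PDE structure directly: at a local maximum $(\mathbf{x}_\varepsilon,t_\varepsilon)$ of $v^\varepsilon-\phi$ on $\{y=0\}$ the first- and second-order conditions $(v^\varepsilon-\phi)_x=0$, $(v^\varepsilon-\phi)_{xx}\le 0$, $(v^\varepsilon-\phi)_y\ge0$ feed into the rescaled boundary relation $-a\varepsilon v^\varepsilon_{xx}+a(v^\varepsilon_x)^2+bv^\varepsilon_x-v^\varepsilon_y=0$, giving $\B[\phi]\le a\varepsilon\phi_{xx}\to 0$, followed by a density argument from $C^2$ to $C^1$ test functions. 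This is essentially the only way to import the boundary information from $v^\varepsilon$, and you should replace your control-based paragraph with it.

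Second, your mechanism for $v\le v_*$ does not go through, and you have in fact correctly identified the obstacle. You propose to extract near-optimal triplets ``for $v^\varepsilon$ through the approximate dynamic programming principle''; again, $v^\varepsilon$ admits no such principle, and even granting one, the required lower semicontinuity of the action and compactness of the control family near the degenerate endpoint $(0,0)$ at $t=0$ is exactly what fails, because the target initial datum is $+\infty$ away from the origin and the comparison principle cannot be invoked directly. The paper circumvents this entirely by \emph{regularising the initial datum}: it perturbs $u^\varepsilon$ to a supersolution $u_k^\varepsilon$ with datum $g_k^\varepsilon = g^\varepsilon + e^{-k\zeta/\varepsilon}$, observes that $v_k^\varepsilon=-\varepsilon\log u_k^\varepsilon\le v^\varepsilon$, and shows that the half-relaxed limits of $v_k^\varepsilon$ coincide (via Proposition~1 and Theorem~1) with the genuine viscosity solution $v_k$ of (2.1) with finite initial datum $k\zeta$. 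Since $v_k\le v_*$ for every $k$ and $v_k\uparrow v$ as $k\to\infty$, one obtains $v\le v_*$ without ever handling the singular datum directly. This regularisation step is the missing idea, and without it your argument for $v\le v_*$ cannot be completed. The converse inequality $v^*\le v$ in the paper is proved by a localisation (finite $R$, positive $\delta$) combined with the DPP for $v^*$ itself in the spirit of \cite[Lemma 3.1]{ref2}, rather than by your trajectory-integration approach, which would also run into the viscous remainder $\varepsilon\Delta v^\varepsilon$ along the path.
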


\begin{proof}

\textbf{Step 1.} Following \cite[Lemma 2.2]{ref2} and \cite[Theorem 3.1]{ref10}, \( v^* \) is a sub-solution of \( \min\{\mathcal{T}[v], v\} = 0 \) in \( \overline{\Rplus^2} \times \R_+ \). We verify boundary conditions in the viscosity sense: for \( \phi \in C^1(\overline{\Rplus^2} \times \R_+) \), if \( v^* - \phi \) attains a strict maximum at \( (x,0,t) \in \overline{\Rplus^2} \times \R_+ \) with \( v^*(x,0,t) > 0 \), then:
\[
\min\left\{\mathcal{T}[\phi](x,0_+,t),\, \mathcal{B}[\phi](x,0,t)\right\} \leq 0.
\]

For \( \phi \in C^2(\overline{\R^2_+} \times \R_+) \), \(\exists\) \( (\mathbf{x}_\epsilon, t_\epsilon) \to (x,0,t) \in \overline{\Rplus^2} \times \Rplus \) as \(\epsilon \to 0_+\) such that \( v^\epsilon - \phi \) attains a local maximum. If \( \{\mathbf{x}_\epsilon\} \) has a subsequence in \( \overline{\Rplus^2} \), then \( \mathcal{T}[\phi](x,0_+,t) \leq 0 \) by \cite[Lemma 2.2]{ref2}. For \( \mathbf{x}_\epsilon \in \{y=0\} \):
\[
\begin{cases}
(v^\epsilon - \phi)_{xx} \leq 0, \\
(v^\epsilon - \phi)_x = 0, \\
(v^\epsilon - \phi)_y \geq 0 \quad \text{at } (\mathbf{x}_\epsilon,t_\epsilon),
\end{cases}
\]

implying:
\[
a(\phi_x)^2 - \phi_y + b \phi_x \leq a(v_x^\epsilon)^2 - v_y^\epsilon + b v_x^\epsilon = a\epsilon v_{xx}^\epsilon \leq a\epsilon \phi_{xx}.
\]

As \( \epsilon \to 0^+ \), \( \mathcal{B}[\phi](x,0,t) \leq 0 \). 

For \(\phi \in C^1(\overline{\Rplus^2} \times \R_+)\), construct \(\phi_n \in C^2(\overline{\Rplus^2} \times \R_+)\) such that:
\[
\phi_n \to \phi \quad \text{in } C^1(\overline{\Rplus^2}).
\]

For sufficiently large \(n\), \(v^\varepsilon - \phi_n\) attains a local maximum at \((\mathbf{x}_n, t_n) \in \overline{\Rplus^2} \times \R_+\) with:
\[
(\mathbf{x}_n, t_n) \to (x, 0, t) \quad \text{as } n \to \infty.
\]

Noting \(D\phi_n(\mathbf{x}_n, t_n) \to D\phi(x, 0, t)\), we conclude:
\[
v^* \text{ is a viscosity sub-solution of (2.1) on } \overline{\Rplus^2} \times \R_+.
\]

Analogously, \(v_*\) is shown to be a viscosity super-solution through parallel arguments.

\textbf{Step 2.} Now we prove
\[
v_*(\mathbf{x},t) \geq v(\mathbf{x},t). 
\tag{3.1}
\]

Let \( \zeta: \overline{\Rplus^2} \to \R \) satisfy:
\[
0 \leq \zeta \leq 1,\quad \zeta(0,0) = 0,\quad \zeta > 0 \text{ on } \overline{\Rplus^2} \setminus \{(0,0)\}.
\]

Define for \( k > 0 \):
\[
g_k^\epsilon(\mathbf{x}) = g^\epsilon(\mathbf{x}) + e^{-k\zeta(\mathbf{x})/\epsilon}.
\]

Let \( u_k^\epsilon \) satisfy (1.3) with initial data \( g_k^\epsilon \). By comparison principle:
\[
u_k^\epsilon \geq u^\epsilon \implies v_k^\epsilon = -\epsilon \log u_k^\epsilon \leq v^\epsilon.
\]

The function \( v_k^\epsilon \) satisfies:
\[
\begin{cases}
v_{k,t}^\epsilon - \epsilon \Delta v_k^\epsilon + |\nabla v_k^\epsilon|^2 +c v_{k,x}^\epsilon + 1 - e^{-v_k^\epsilon/\epsilon} = 0, & \quad (x,t) \in \overline{\Rplus^2} \times \R_+ \setminus \Gamma, \\
-a\epsilon v_{k,xx}^\epsilon + a\left(v_{k,x}^\epsilon\right)^2 + b\,v_{k,x}^\epsilon - v_{k,y}^\epsilon = 0, & \quad (x,t) \in \Gamma, \\
v_k^\epsilon(\mathbf{x},0) = 
\begin{cases}
-\epsilon \log g_k^\epsilon(\mathbf{x}), & \quad \mathbf{x} \in G_\epsilon, \\
k\zeta(\mathbf{x}), & \quad \mathbf{x} \in \overline{\Rplus^2} \setminus G_\epsilon.
\end{cases}
\end{cases}
\]

Analogous to Lemma 2, \(\{v_k^\varepsilon\}\) is bounded in \(C_{\text{loc}}(\overline{\Rplus^2} \times \R_+)\) uniformly in \(\varepsilon\). Hence, the half-relaxed limits \(v_k^*\) and \(v_{*}^k\) exist and are respectively sub- and super-solutions of (2.1) on \(\overline{\Rplus^2} \times \R_+\). Noting:
\[
v_k^*(\mathbf{x},0) = v_{*}^k(\mathbf{x},0) = k\zeta(\mathbf{x}),
\]

Proposition 1 implies \(v_k = v_k^* = v_{*}^k\) on \(\overline{\Rplus^2} \times [0,\infty)\), where \(v_k\) solves (2.1) with \(g_0(\mathbf{x}) = k\zeta(\mathbf{x})\). By Theorem 1:
\[
v_k(\mathbf{x},t) = \sup_{\theta \in \Theta} \inf_{(\gamma,\eta,l) \in \mathbb{Y}^{\mathbf{x},t}} \left\{ \int_0^{\theta[\gamma]} L(-\eta(\tau)) + F(\gamma,\eta,l)(\tau) d\tau + \Chi_{\{\theta[\gamma] = t\}} k\zeta(\mathbf{x}) \right\}.
\]

Since \(v_k^\varepsilon \leq v^\varepsilon\), we have \(v_k \leq v_{\ast}\) for all \(k > 0\). Letting \(k \to \infty\), (3.1) follows.

\textbf{Step 3. } To prove
\[
v^*(\mathbf{x},t) \leq v(\mathbf{x},t), 
\tag{3.2}
\]

observe that for any fixed \(\delta, R>0\), \(v^*\) is a sub-solution of:
\[
\begin{cases}
\min\left\{v_t + |\nabla v|^2 + c v_x + 1,\, v\right\} = 0, & |x| < R,\, 0 < |y| < R,\, t > \delta, \\
a v_x^2(x,0,t) - v_y(x,0_+,t) + b\,v_x(x,0,t) = 0, & |x| < R,\, y = 0,\, t > \delta.
\end{cases}
\]

Following \cite[Lemma 3.1]{ref2}, we have
\begin{align*}
v^*(\mathbf{x},t) \leq \sup_{\theta \in \Theta} \inf_{(\gamma,\eta,l) \in \mathbb{Y}^{\mathbf{x},t}} \biggl\{ &\int_0^{(t-\delta) \wedge s \wedge \theta[\gamma]} L(-\eta(\tau)) + F(\gamma,\eta,l)(\tau) \, d\tau \\
&+ \Chi_{\{s \wedge \theta[\gamma] \geq (t-\delta)\}} v^*(\gamma(t-\delta),\delta) \\
&+ \Chi_{\{(t-\delta) \wedge \theta[\gamma] \geq s\}} v^*(\gamma(s),t-s) \biggr\},
\end{align*}

where \(s := \inf\{\tau : \gamma(\tau) \in \partial B_R(0)\}\) is the so-called exit time from \(int B_R(0)\).

Direct computation yields:
\begin{align*}
v^*(\mathbf{x},t) \leq \sup_{\theta \in \Theta} \inf_{(\gamma,\eta,l) \in \mathbb{Y}^{\mathbf{x},t}} \Bigg\{ 
& \int_0^{(t-\delta) \wedge \theta[\gamma]}  L(-\eta(\tau)) + F(\gamma,\eta,l)(\tau) \, d\tau 
\\
& + \Chi_{\{\theta[\gamma] \geq (t-\delta)\}} v^*(\gamma(t-\delta),\delta) 
\,\Bigg|\, 
\begin{aligned}
&\gamma(\tau) \in B_R(0) \text{ for } 0 \leq \tau \leq t - \delta, \\
&\gamma(t-\delta) = (0,0)
\end{aligned} 
\Bigg\}.
\end{align*}

Letting \(R \to \infty\) and \(\delta \to 0\), (3.2) is established.

\end{proof}

\begin{remark}

\begin{enumerate}
\item The limit \( v \) constitutes the unique viscosity solution of (2.1) within the class of functions \( f \) satisfying:
\[
\begin{cases}
f \text{ is bounded below and locally Lipschitz continuous on } (\overline{\Rplus^2} \times \R_+) \cup \{(0,0,0)\}, \\
f(0,0,0) = 0, \quad f(\mathbf{x},t) \to \infty \text{ as } t \to 0_+ \text{ on } \overline{\Rplus^2} \setminus \{(0,0)\}.
\end{cases}
\]
As these arguments follow standard methodology, we defer to the comprehensive treatments in \cite{ref7,ref11}.

\item From the definition of half-relaxed limits and \( v^* = v = v_* \), one immediately deduces that \( v^\varepsilon \) converges to \( v \) uniformly on any compact subset \( K \subset \overline{\Rplus^2} \). (Full technical details appear in \cite[Remark 6.4]{ref10})
\end{enumerate}

\end{remark}

\subsection{Formulation for the Asymptotic Behaviour of the Phase Function}

In the previous section, we have established that  \( v^{\epsilon} \) converges to the viscosity solution \(v\) of (1.1) and presented the variational formula for \(v\). We now demonstrate the formula (1.3). Define the \textbf{payoff}: 
\[
J(\mathbf{x},t) = \inf_{(\gamma,\eta,l)\in \mathbb{Y}^{\mathbf{x},t}} \left\{ \int_0^t L(-\eta(\tau)) + F(\gamma,\eta,l)(\tau) d\tau \,\bigg|\, \gamma(t) = (0,0) \right\}.
\tag{3.3}
\]

\( J \) is the viscosity solution of:
\[
\begin{cases}
v_t + |\nabla v|^2 + cv_x + 1 = 0, & (x,y,t) \in \overline{\Rplus^2} \times \mathbb{R}_+ \setminus \Gamma, \\
a v_x^2 - v_y + b\,v_x = 0, & (x,y,t) \in \Gamma, \\
v(x,y,0) = 0, & (x,y) = (0,0), \\
v(x,y,t) \to \infty & \text{as} \quad t \to 0^+, \quad (x,y) \in \overline{\Rplus^2} \setminus \{(0,0)\}.
\end{cases}
\]

\begin{theorem}
The function \( J \) can be expressed as:
\[
J(\mathbf{x},t) = \varphi^*(\mathbf{x},t) - t, 
\tag{3.4}
\]

where 
\[
\varphi^*(\mathbf{x},t) = \min_{s\geq 0} \left\{ \frac{\left(-x + b\,s + c\,t\right)^2}{4(t+as)} + \frac{(y+s)^2}{4t} \right\}.
\]
\end{theorem}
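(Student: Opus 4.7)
The plan is to establish the matching inequalities $J(\mathbf{x},t) \geq \varphi^*(\mathbf{x},t) - t$ and $J(\mathbf{x},t) \leq \varphi^*(\mathbf{x},t) - t$. The lower bound will come from convexity arguments (Jensen + integral Cauchy--Schwarz) applied to the action over an arbitrary admissible triplet, followed by an explicit quadratic minimization in an auxiliary parameter. The upper bound will be obtained by constructing, for each $s \geq 0$, a concrete control triplet that saturates every convexity inequality simultaneously, so that its action equals exactly the $s$-term inside the definition of $\varphi^*$, minus $t$.

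For the lower bound, fix any $(\gamma, \eta, l) \in \mathbb{Y}^{\mathbf{x},t}$ with $\gamma(t) = (0,0)$, and introduce the path invariants $s := \int_0^t l(\tau)\,d\tau$ and $A := \int_0^t \eta_1(\tau)\,d\tau$. The relation $\dot\gamma_2 = \eta_2 + l$ (interpreted as in (H2)) together with $\gamma_2(0) = y$, $\gamma_2(t) = 0$ gives $\int_0^t \eta_2\,d\tau = -(y+s)$, and since $\eta_1 = \dot\gamma_1$ where $l = 0$, one also obtains $\int_0^t (\eta_1 - \dot\gamma_1)\,d\tau = A + x$. Writing $L(-\eta) = \frac{(\eta_1 + c)^2 + \eta_2^2}{4} - 1$ and applying Jensen to each squared term separately yields
\[
\int_0^t L(-\eta)\,d\tau \;\geq\; \frac{(A + ct)^2}{4t} + \frac{(y+s)^2}{4t} - t.
\]
For the road term, the convexity of $(u,v) \mapsto u^2/v$ gives $\int_0^t F\,d\tau \geq (A + x - bs)^2/(4as)$ when $s > 0$ (and $F \equiv 0$ if $s = 0$, which forces $A = -x$). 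Minimizing the sum over $A \in \mathbb{R}$ is a direct quadratic computation that produces
\[
\int_0^t (L + F)\,d\tau \;\geq\; \frac{(-x + bs + ct)^2}{4(t + as)} + \frac{(y+s)^2}{4t} - t \;\geq\; \varphi^*(\mathbf{x},t) - t,
\]
where the last inequality uses $s \geq 0$. Taking the infimum over admissible triplets yields the desired lower bound.

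For the upper bound, let $s^*$ be a minimizer of $\varphi^*(\mathbf{x},t)$ and let $A^*$ denote the $A$-minimizer of the quadratic above at $s = s^*$. Set $\eta \equiv (A^*/t,\,-(y+s^*)/t)$ constant in $\tau$, choose the split time $t_2 := yt/(y+s^*)$, and put $l \equiv 0$ on $[0,t_2]$ and $l \equiv (y+s^*)/t$ on $(t_2, t]$. The path $\gamma$ is then determined: on $[0, t_2]$ it is the straight segment from $\mathbf{x}$ to $(x_1, 0)$ with $x_1 := x + (A^*/t)\,t_2$; on $[t_2, t]$ we set $\gamma_2 \equiv 0$ and take $\dot\gamma_1 \equiv -x_1/(t - t_2)$ constant so that $\gamma_1(t) = 0$. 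One then checks (H1)--(H2), that $\gamma_2 \geq 0$ throughout, and by direct substitution that each inequality used in the lower bound becomes an equality, so that the action of this triplet equals exactly $\varphi^*(\mathbf{x},t) - t$. Degenerate cases ($y = 0$ or $s^* = 0$) reduce the construction to a pure road trajectory or a pure rectilinear segment, respectively.

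The principal technical point is identifying the correct split time $t_2 = yt/(y + s^*)$: this is precisely what makes the constant choice $\eta_2 \equiv -(y+s^*)/t$ compatible with the trajectory reaching the boundary exactly at $\tau = t_2$, and it is also what allows the Cauchy--Schwarz inequality on the road interval to be saturated by the constant $l \equiv (y+s^*)/t$. Once this ansatz is in place, everything else reduces to quadratic bookkeeping, and the case analysis at the edges ($s^* = 0$ versus $s^* > 0$, and $y = 0$ versus $y > 0$) poses no fundamental obstacle.
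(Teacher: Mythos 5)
Your proof is correct and follows the same overall scheme as the paper's: a convexity/Jensen lower bound matched against an explicit control triplet that saturates every inequality. The one genuine difference lies in how the lower bound is organized. The paper first completes the square in $\eta_1$ pointwise (holding $\dot\gamma_1,l$ fixed), reducing the integrand to the two convex functions $K_1(\dot\gamma_1,l)$ and $K_2(\dot\gamma_2,l)$, and then applies Jensen to these; you instead apply Jensen separately to the three scalar terms $(\eta_1+c)^2/4$, $\eta_2^2/4$, and the perspective term $(\eta_1-\dot\gamma_1-bl)^2/(4al)$, which reduces matters to a one-variable quadratic minimization over $A=\int_0^t\eta_1\,d\tau$ whose closed-form minimum reproduces $\frac{(-x+bs+ct)^2}{4(t+as)}$. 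Both orderings yield the same intermediate bound; yours makes the elimination of $\eta_1$ explicit at the integrated level, while the paper's does it pointwise and more compactly. Your optimal triplet, with constant $\eta=(A^*/t,\,-(y+s^*)/t)$, split time $t_2=yt/(y+s^*)$, and piecewise-constant $l$, is algebraically identical to the paper's (one checks $A^*/t=-\frac{x+(ac-b)s^*}{t+as^*}$ and $-x_1/(t-t_2)=\frac{ty(ac-b)-x(t+as^*+ay)}{t(t+as^*)}$), so the equality and degenerate cases are handled the same way.
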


\bigskip

\begin{proposition}
For any \((\mathbf{x},t) \in \overline{\Rplus^2} \times \R_+\), there exists a unique \( s^* = s^*(\mathbf{x}, t) \geq 0 \) satisfying:
\[
\varphi^*(\mathbf{x},t) = \frac{\left(-x + b\,s^\ast + c\,t\right)^2}{4(t+as^\ast)} + \frac{(y+s^\ast)^2}{4t}.
\]

Moreover, we have the following properties:
\begin{enumerate}
    \item \( s^* = 0 \) if and only if \( (\mathbf{x}, t) \in S_0 \coloneqq \{(\mathbf{x}, t) \in \overline{\Rplus^2} \times \R_+ : y \geq \dfrac{a}{2t}(x-ct)^{2} + b(x-ct)\} \). For \( (\mathbf{x}, t) \in S \coloneqq (\overline{\Rplus^2} \times \R_+) \setminus S_0 \), \( \quad s^* \) satisfies the equation:
    \[
    2(y + s^\ast)(t + as^\ast)^2 = ax^2t - 2(ac-b)x t^2 + c(ac-2b) t^3.
    \tag{$\star$}
    \]
    
    \item The function \( \varphi^*(\mathbf{x}, t) \) exhibits \( \mathcal{C}^\infty \)-smoothness in \( S_0 \cap (\overline{\Rplus^2} \times \R_+) \) and \( S \cap (\overline{\Rplus^2} \times \R_+) \), whilst maintaining \( \mathcal{C}^1 \)-smoothness throughout \( \overline{\Rplus^2} \times \R_+ \).
    
    \item Both \( \varphi^*(\mathbf{x}, t) \) and \( s^*(\mathbf{x}, t) \) demonstrate homogeneity of degree one in the variable \(t\), i.e. :
    \[
    \varphi^*(t\mathbf{x}, t) = t \varphi^*(\mathbf{x}, 1), \quad s^*(t\mathbf{x}, t) = t s^*(\mathbf{x}, 1).
    \]
\end{enumerate}
\end{proposition}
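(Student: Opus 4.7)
The plan is to reduce all three claims to the analysis of the single-variable auxiliary function
\[
\Phi(s) := \frac{(-x + bs + ct)^{2}}{4(t+as)} + \frac{(y+s)^{2}}{4t}, \qquad s \geq 0,
\]
treated with $(\mathbf{x},t)$ as parameters, and then to invoke strict convexity together with the envelope and implicit function theorems.

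First I would settle existence and uniqueness of $s^{*}$. Direct differentiation gives
\[
\Phi''(s) = \frac{(bt + a(x-ct))^{2}}{2(t+as)^{3}} + \frac{1}{2t} \geq \frac{1}{2t} > 0,
\]
so $\Phi$ is strictly convex on $[0,\infty)$; combined with the coercivity $\Phi(s) \sim s^{2}/(4t) \to \infty$ as $s \to \infty$, this yields a unique global minimizer $s^{*} \in [0,\infty)$. For the transition characterization I would evaluate
\[
\Phi'(0) = \frac{1}{2t}\Bigl[y - \tfrac{a}{2t}(x-ct)^{2} - b(x-ct)\Bigr],
\]
so by strict convexity $s^{*} = 0$ if and only if $\Phi'(0) \geq 0$, i.e.\ if and only if $(\mathbf{x},t) \in S_{0}$. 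On the complementary region $S$, the unique positive root $s^{*}$ of $\Phi'(s) = 0$ is obtained by clearing the denominator $4t(t+as)^{2}$; expanding $(bs - \alpha)(abs + 2bt + a\alpha)$ with $\alpha := x - ct$ and regrouping coefficients by powers of $t$ produces the polynomial identity $(\star)$.

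For regularity claim (2), on the open set $S$ the implicit function theorem applied to $\Phi_{s}(s;\mathbf{x},t) = 0$, with $\Phi_{ss} \geq 1/(2t) > 0$, yields $s^{*} \in C^{\infty}(S)$, whence $\varphi^{*}(\mathbf{x},t) = \Phi(s^{*};\mathbf{x},t) \in C^{\infty}(S)$; on $S_{0}$, $\varphi^{*}(\mathbf{x},t) = [(x-ct)^{2} + y^{2}]/(4t)$ is manifestly $C^{\infty}$. To upgrade to $C^{1}$ across the transition curve $\partial S_{0} = \partial S$, I would invoke the envelope identity $\nabla_{(\mathbf{x},t)}\varphi^{*} = (\nabla_{(\mathbf{x},t)}\Phi)(s^{*};\mathbf{x},t)$, valid on both sides of the interface since $\Phi_{s}(s^{*}) = 0$ at the minimizer (the relation is trivial on $S_{0}$, where $s^{*} \equiv 0$). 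Continuity of $s^{*}$ up to the interface, combined with joint smoothness of $\Phi$, then delivers continuity of $\nabla\varphi^{*}$ across $\partial S_{0}$.

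The homogeneity claim (3) is immediate from the substitution $s = t\tilde s$ in the defining minimization: each summand of $\Phi$ factors out exactly one $t$, yielding $\varphi^{*}(t\mathbf{x},t) = t\varphi^{*}(\mathbf{x},1)$ and correspondingly $s^{*}(t\mathbf{x},t) = ts^{*}(\mathbf{x},1)$. The main obstacle is the $C^{1}$-matching at $\partial S_{0}$: the envelope identity sidesteps explicit differentiation of the two case-wise formulas, but still requires confirming that $s^{*}(\mathbf{x},t) \to 0$ continuously as $(\mathbf{x},t) \to \partial S_{0}$ from within $S$, which in turn follows from the continuous dependence of the minimizer of a parameterized strictly convex problem on its parameter.
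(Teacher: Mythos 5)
Your proposal is correct and follows essentially the same route as the paper: strict convexity of $s\mapsto f(s;\mathbf{x},t)$ together with coercivity gives existence and uniqueness, evaluation of $f_s(0)$ identifies $S_0$, clearing denominators in $f_s(s^*)=0$ gives the polynomial characterization, the implicit function theorem on $S$ plus the explicit Gaussian form on $S_0$ give piecewise smoothness, and the substitution $s = t\tilde s$ gives homogeneity. Your framing of the $C^1$-matching via the envelope identity and continuity of the minimizer is a slightly cleaner way to say what the paper asserts in one line, but it is the same underlying argument.
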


\begin{proof}

\medskip

Define the objective function:
\[
f(s;\mathbf{x},t) \coloneqq \frac{\left(-x + b s + c t\right)^2}{4(t+as)} + \frac{(y+s)^2}{4t},
\]
so that $\varphi^*(\mathbf{x},t) = \min_{s \geq 0} f(s;\mathbf{x},t)$.

\medskip

\textbf{Existence.} Fix \((x,y,t)\) with \(t>0\). The function \(s\mapsto f(s;x,y,t)\) is continuous on \([0,\infty)\) and satisfies
\(\lim_{s\to\infty} f(s;x,y,t)=\infty\), since \((y+s)^2/(4t)\sim s^2/(4t)\to\infty\). Hence a minimum exists on \([0,\infty)\).

\medskip

\noindent\textbf{Uniqueness via strict convexity.} Compute the second derivative with respect to \(s\). A direct calculation yields the closed form
\[
\frac{\partial^2 f}{\partial s^2}
= \frac{(a s + t)^3 + t\bigl(a(x-ct)+bt\bigr)^2}{2t\,(a s + t)^3}.
\]

Since \(t>0\) and \(a s + t>0\) for all \(s\ge0\), the numerator is strictly positive; hence \(\partial_{ss} f>0\) for all \(s\ge0\). Thus \(f\) is strictly convex on \([0,\infty)\), and the minimizer is unique.

\medskip

\noindent\textbf{Characterization of the minimizer and $S_0$.} Let \(f_s=\partial f/\partial s\). Differentiation gives
\[
f_s = -\frac{a\bigl(-x+b s + c t\bigr)^2}{4(t+as)^2} + \frac{b\bigl(-x+b s + c t\bigr)}{2(t+as)} + \frac{y+s}{2t}.
\]

If the unique minimizer satisfies \(s^\ast=0\), the necessary and sufficient condition is \(f_s(0)\ge0\). Evaluating at \(s=0\) we obtain
\[
f_s(0) = -\frac{a(c t - x)^2}{4t^2} + \frac{b(c t - x)}{2t} + \frac{y}{2t}.
\]

Multiplying by \(4t^2>0\) and rearranging gives the equivalent inequality
\[
- a(c t - x)^2 + 2t\bigl(b(c t - x) + y\bigr)\ge0,
\]

which, after replacing \(c t - x = -(x-ct)\), is exactly
\[
y \ge \frac{a}{2t}(x-ct)^2 + b(x-ct).
\]

Thus the threshold set \(S_0\) is as stated.

If \(f_s(0)<0\) then the (unique) minimizer lies in the interior \(s^\ast>0\) and satisfies \(f_s(s^\ast)=0\). Multiplying \(f_s(s^\ast)=0\) by \(\; 4(t+as^\ast)^2\cdot 2t \;\) to clear denominators yields the identity
\[
2(y + s^\ast)(t+ a s^\ast)^2
= a\bigl(-x + b s^\ast + c t\bigr)^2 t - 2b\bigl(-x + b s^\ast + c t\bigr)(t+ a s^\ast) t,
\]

which is the statement \((\star)\) in the proposition and algebraically equivalent to the cubic equation
\begin{align*}
2a^2 (s^\ast)^3 &+ \bigl(2a^2 y + a b^2 t + 4a t\bigr)(s^\ast)^2 \\
&+ \bigl(4a t y + 2b^2 t^2 + 2 t^2\bigr) s^\ast \\
&+ \bigl(-a c^2 t^3 + 2a c t^2 x - a t x^2 + 2 b c t^3 - 2 b t^2 x + 2 t^2 y\bigr) = 0.
\end{align*}

Therefore \(s^\ast\) is the unique real root \(\ge0\) of that cubic.

\medskip

\noindent\textbf{Regularity.} On \(S_0\) we have \(s^\ast\equiv0\) and
\[
\varphi^*(x,y,t)=f(0;x,y,t)=\frac{(-x+ct)^2+y^2}{4t},
\]

which is \(C^\infty\) for \(t>0\). On the complementary region \(S\) the minimizer satisfies the smooth implicit equation \(f_s(s;x,y,t)=0\) and, since \(f_{ss}(s^\ast)>0\), the Implicit Function Theorem gives that \(s^\ast(x,y,t)\) is \(C^\infty\) there; hence \(\varphi^*=f(s^\ast(\cdot),\cdot)\) is \(C^\infty\) on \(S\). On the boundary \(\partial S_0\) the one-sided derivatives computed from the two sides agree (verify by direct substitution \(s^\ast=0\) in the envelope expressions), so \(\varphi^*\) is globally \(C^1\).

\medskip

\noindent\textbf{Homogeneity.} Substitute \(s=k\,t\) and compute
\[
\begin{aligned}
\varphi^*(t x,t y,t)
&=\min_{k\ge0} \left\{ \frac{(-t x + bkt + c t)^2}{4(t + a k t)} + \frac{(t y + k t)^2}{4t}\right\}\\
&=t\min_{k\ge0}\left\{\frac{(-x + b k + c)^2}{4(1 + a k)} + \frac{(y + k)^2}{4}\right\}
= t\,\varphi^*(x,y,1).
\end{aligned}
\]

Hence \(\varphi^*(t\mathbf{x},t)=t\varphi^*(\mathbf{x},1)\) and the minimizer scales as \(s^\ast(t\mathbf{x},t)=t s^\ast(\mathbf{x},1)\).

\medskip

This completes the proof.
\end{proof}

\bigskip

\textbf{Proof of Theorem 3 :}

\begin{proof}
For any triplets \((\gamma, \eta, l) \in \mathbb{Y}^{\mathbf{x},t}\) and any fixed \((\mathbf{x}, t) \in \overline{\Rplus^2} \times \Rplus\), we have
\begin{align*}
&\quad\int_{0}^{t} L(-\eta(\tau)) + F(\gamma,\eta,l)(\tau) \, d\tau \\ 
\\
= &\int_{0}^{t} \frac{\eta_1^2 + \eta_2^2}{4} + \frac{c}{2} \eta_1 + \frac{c^2}{4} - 1 + \frac{(\eta_1 - \dot{\gamma}_1 - b l)^2}{4al} \, d\tau \\
\\
= &\int_{0}^{t} \frac{(al+1)\eta_1^2 + 2(acl-\dot{\gamma}_1 - b l)\eta_1 + alc^2 + (\dot{\gamma}_1 + b l)^2}{4al} + \frac{(\dot{\gamma}_2 - l)^2}{4} \, d\tau - t \\
\\
\geq &\int_{0}^{t} \frac{(\dot{\gamma}_1 + bl + c)^2}{4(1+al)} + \frac{(\dot{\gamma}_2 - l)^2}{4} \, d\tau - t
\end{align*}

The condition for equality is
\[
\dot{\gamma}_1 = (1+al)\eta_1 + (ac-b)l.
\]

Define convex functions on \(\Rplus^2\):
\[
K_1(m,l) := \frac{\left(m+bl+c\right)^2}{4(1+al)}, \quad K_2(m,l) := \frac{(m - l)^2}{4}.
\]

By Jensen's inequality:
\begin{align*}
& \quad \int_{0}^{t} \frac{(\dot{\gamma}_1 + bl+c)^2}{4(1+al)} + \frac{(\dot{\gamma}_2 - l)^2}{4} \, d\tau \\
\\
&= \int_0^t K_1(\dot{\gamma}_1(\tau),l(\tau)) + K_2(\dot{\gamma}_2(\tau),l(\tau)) d\tau \\
\\
&\geq t \left[ K_1\left(\textless \dot{\gamma}_1 \textgreater, \textless l \textgreater\right) + K_2\left(\textless \dot{\gamma}_2 \textgreater, \textless l \textgreater\right) \right] \\
\\
&= \frac{\left( -x + b\,s + c\,t \right)^2}{4(t+as)} + \frac{(y+s)^2}{4t}
\end{align*}

where \( s = \|l\|_1 \quad and \quad \textless f \textgreater = \frac{1}{t} \int_0^t f(\tau) d\tau \). 

Thus:
\[
J(x,y,t) \geq \min_{s \geq 0} \left\{ \frac{\left(-x + b\,s + c\,t \right)^2}{4(t+as)} + \frac{(y+s)^2}{4t} \right\} - t.
\]

Now, we are going to establish that this infimum can be attained. 

By Proposition 2, there exists an unique \(s^\ast = s^\ast(x, y, t) \geq 0\) such that 
\[
\varphi^*(x, y ,t) = \frac{\left(-x + b\,s^\ast + c\,t\right)^2}{4(t+as^\ast)} + \frac{(y+s^\ast)^2}{4t}.
\]

Consider the following control triplets:
\[
\eta(\tau) = \left( -\frac{x + (ac-b)s^*}{t + as^*}, \quad -\frac{y + s^*}{t} \right), \quad 0 \leq \tau \leq t,
\]

\[
l(\tau) = 
\begin{cases}
0, & 0 \leq \tau < t_0, \\
\\
\frac{y + s^*}{t}, & t_0 \leq \tau \leq t,
\end{cases}
\]

\[
\gamma(\tau) = 
\begin{cases}
\left( \frac{x(t - \tau + as^\ast) - \tau (ac-b)s^\ast}{t + as^\ast}, \quad \frac{y(t-\tau) - \tau s^*}{t} \right), & 0 \leq \tau < t_0, \\
\\
\left( x - \frac{yt \big( x + (ac-b)s^* \big)}{(y + s^*)(t + as^*)} + \tau\,\frac{ty(ac-b) - x\left( t+as^\ast+ay \right)}{t(t + as^*)}, \quad 0 \right), & t_0 \leq \tau \leq t,
\end{cases}
\]

where \( t_0 = \frac{y t}{y + s^\ast} \) and \( x_0 = x - \frac{yt \big( x + (ac-b)s^* \big)}{(y + s^*)(t + as^*)} \). 

Verification is straightforward with respect to establishing that \((\gamma, \eta, l) \in \mathbb{Y}^{\mathbf{x}, t}\) and \(\gamma(t) = (0, 0)\). 

Subsequently, a direct calculation confirms the following:
\[
\int_0^t L(-\eta(\tau)) + F(\gamma,\eta,l)(\tau) d\tau = \frac{\left(-x + b\,s^\ast + c\,t\right)^2}{4(t+as^\ast)} + \frac{(y+s^\ast)^2}{4t} - t.
\]

Consequently, the control triplets selected are optimal. 

\end{proof}

\begin{remark}[\textbf{Optimal Path Existence and Construction}]

We omit a general proof of existence for optimal paths, as it can be adapted from \cite[Theorem 4.7]{ref7}. Crucially, establishing existence alone does not yield formula (3.4). Instead, we construct control triplets achieving the variational lower bound, simultaneously identifying both optimal paths and infimum.  

For the symmetry of \(\varphi^*\), assume \((x,y)\) lies in the first quadrant. Equality in the variational formula requires:
\[
\begin{cases}
K_1(\dot{\gamma}_1(\tau), l(\tau)) = K_1\left(-\frac{x}{t}, \frac{s^*}{t}\right) + \nabla K_1\left(-\frac{x}{t}, \frac{s^*}{t}\right) \cdot \left[(\dot{\gamma}_1(\tau), l(\tau)) - \left(-\frac{x}{t}, \frac{s^*}{t}\right)\right], \\
\\
K_2(\dot{\gamma}_2(\tau), l(\tau)) = K_2\left(-\frac{y}{t}, \frac{s^*}{t}\right) + \nabla K_2\left(-\frac{y}{t}, \frac{s^*}{t}\right) \cdot \left[(\dot{\gamma}_2(\tau), l(\tau)) - \left(-\frac{y}{t}, \frac{s^*}{t}\right)\right].
\end{cases}
\]

i.e.:

\[
\begin{cases}
\left( \dot{\gamma}_1(\tau) + \frac{x}{t}, \; l(\tau) - \frac{s^*}{t} \right) \mathbf{H}\left(K_1(-\frac{x}{t}, \frac{s^*}{t})\right) \begin{pmatrix} \dot{\gamma}_1(\tau) + \frac{x}{t} \\ l(\tau) - \frac{s^*}{t} \end{pmatrix} = 0, \\
\\
\left( \dot{\gamma}_2(\tau) + \frac{y}{t}, \; l(\tau) - \frac{s^*}{t} \right) \mathbf{H}\left(K_2(-\frac{y}{t}, \frac{s^*}{t})\right) \begin{pmatrix} \dot{\gamma}_2(\tau) + \frac{y}{t} \\ l(\tau) - \frac{s^*}{t} \end{pmatrix} = 0.
\end{cases}
\]

where \(\mathbf{H}(K_1)\) and \(\mathbf{H}(K_2)\) are the Hessian matrices of \(K_1\) and \(K_2\).

Under conditions (H1) and (H2), we have
\begin{itemize}

\item \(l(\tau) = 0\) in the field: \(\dot{\gamma}(\tau) = \left( -\frac{x + (ac-b)s^*}{t + as^*}, \quad -\frac{y + s^*}{t} \right)\)

\item \(\dot{\gamma}_2(\tau) = 0\) on the road: \(l(\tau) = \frac{y + s^*}{t},\quad \dot{\gamma}_1(\tau) = \frac{ty(ac-b) - x\left( t+as^\ast+ay \right)}{t(t + as^*)}\)

\end{itemize}

At this juncture, one may derive the optimal triplets via elementary computations. (These optimal control triplets may be construed as the "most efficacious" trajectory from the initial point \( \mathbf{x} = (x,y) \) to the origin \((0, 0)\) within temporal constraint \( t \).) We observe that the optimal path connects the origin and the point \( \mathbf{x} \) in two distinct modalities:
\begin{itemize}
    \item When \( y \geq \dfrac{a}{2t}(x-ct)^{2} + b(x-ct) \), the optimal path constitutes a \emph{rectilinear segment} from \( \mathbf{x} \) to the origin traversed at uniform velocity.
    
    \item When \( y < \dfrac{a}{2t}(x-ct)^{2} + b(x-ct) \), the optimal path initially proceeds linearly through the field, oriented parallel to the normal vector of the contour line of \(\varphi^*(\cdot,t)\), attaining \((x_0, 0)\) at time \( t_0 \). Subsequently, it propagates along the \( x \)-axis to the origin.
\end{itemize}

The critical transitional boundary is delineated by the equation:
\[
y = \dfrac{a}{2t}(x-ct)^{2} + b(x-ct).
\]

\end{remark}

\medskip

\begin{theorem}
The functional \( J \) satisfies Freidlin's condition:
\[
J(\mathbf{x},t) = \inf_{\substack{(\gamma, \eta, l) \in \mathbb{Y}^{\mathbf{x},t} \\ \gamma(t) = \mathbf{0} \\ (\gamma(\tau), t - \tau) \in P}} \left\{ \int_0^t L(-\eta(\tau)) + F(\gamma, \eta, l)(\tau)  d\tau \right\}
\]

for any \((\mathbf{x}, t) \in \partial P\), where \( P := \{(\mathbf{x}, t) : J(\mathbf{x}, t) > 0\} \). Consequently, we have \( v = \max\{0, J\} \).
\end{theorem}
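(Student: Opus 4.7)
The plan is to verify Freidlin's condition on $\partial P$ by direct inspection of the explicit optimal triplets furnished by Theorem 3, leveraging the joint $1$-homogeneity $\varphi^*(\lambda\mathbf{x},\lambda t)=\lambda\varphi^*(\mathbf{x},t)$ of Proposition 2(3), and then to combine this condition with the $\sup$-$\inf$ structure defining $v$ to deduce $v=\max\{0,J\}$ via a two-sided inequality.

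For the first assertion, fix $(\mathbf{x},t)\in\partial P$, so $J(\mathbf{x},t)=\varphi^*(\mathbf{x},t)-t=0$. One direction is trivial since the restricted competitor set is smaller; for the reverse I would exhibit the explicit optimal triplet constructed in Theorem 3 as a feasible competitor in the restricted problem (understanding the constraint $(\gamma(\tau),t-\tau)\in P$ in the closure sense $(\gamma(\tau),t-\tau)\in\overline{P}$, which is harmless for the infimum). In the rectilinear regime $(\mathbf{x},t)\in S_0$, where $\gamma(\tau)=\mathbf{x}(1-\tau/t)$, homogeneity at once gives $\varphi^*(\gamma(\tau),t-\tau)=\tfrac{t-\tau}{t}\varphi^*(\mathbf{x},t)=t-\tau$, so $J(\gamma(\tau),t-\tau)=0$ and the path remains on $\partial P$. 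In the field-road regime $(\mathbf{x},t)\in S$ the path breaks at $t_0=yt/(y+s^*)$; on $[0,t_0]$ a scaled homogeneity argument reduces to the previous case, while on $[t_0,t]$ one substitutes the reduced candidate $\tilde s=s^*-(\tau-t_0)(y+s^*)/t$ into the definition of $\varphi^*$ and uses the envelope identity $\varphi^*(\mathbf{x},t)=f(s^*;\mathbf{x},t)$ of Proposition 2 to recover $\varphi^*(\gamma(\tau),t-\tau)=t-\tau$ along this piece as well. This produces the required competitor of value $0$.

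For the identification $v=\max\{0,J\}$, the lower bound is immediate: choosing $\theta\equiv 0$ and $\theta\equiv t$ in the $\sup$-$\inf$ formula for $v$ yields $v\geq 0$ and $v\geq J$ respectively. For the upper bound on $\overline{P}$, let $\gamma^*$ be the optimal triplet for $J(\mathbf{x},t)$; Bellman's principle applied to the unconstrained value $J$ gives
\[
\int_0^\sigma L(-\eta)+F(\gamma^*,\eta,l)(\tau)\,d\tau = J(\mathbf{x},t)-J(\gamma^*(\sigma),t-\sigma),
\]
and Freidlin's condition (extended from $\partial P$ to the whole of $\overline{P}$ by the same homogeneity-based argument) forces $J(\gamma^*(\sigma),t-\sigma)\geq 0$, so using $\gamma^*$ as competitor against any stopping time $\theta$ yields $\inf_\gamma \int_0^{t\wedge\theta[\gamma]} L+F\leq J(\mathbf{x},t)$, whence $v(\mathbf{x},t)\leq J(\mathbf{x},t)$. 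On $P^c$ the cleanest route is to verify directly that $w:=\max\{0,J\}$ is a viscosity solution of (1.1): the sub/supersolution tests reduce to those of $J$ inside $\{J>0\}$, the identity $\mathcal{T}[0]=1>0$ together with $\mathcal{B}[0]=0$ handles the interior of $\{J<0\}$, and on $\partial P$ the $C^1$-regularity of $J$ from Proposition 2(2) lets one compare test functions against $J$ from the $P$-side; the uniqueness statement in Remark 1 then forces $v=w$.

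The main obstacle is Freidlin's condition in the field-road regime, where the optimal path is piecewise and the naive homogeneity scaling no longer applies globally; one must exploit the cubic equation $(\star)$ characterizing $s^*(\mathbf{x},t)$ to select a reduced minimizer at each intermediate time and verify the identity $\varphi^*(\gamma(\tau),t-\tau)=t-\tau$ piece by piece. The delicacy at $\partial P$ when verifying the viscosity property of $\max\{0,J\}$ is secondary but still requires care in matching test functions across the free boundary.
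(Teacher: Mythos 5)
Your treatment of the rectilinear regime $(\mathbf{x},t)\in S_0$ is correct and matches the paper. But the field--road regime is where you go wrong, and the direction of the error is telling: you claim the optimal path keeps $\varphi^*(\gamma(\tau),t-\tau)=t-\tau$ throughout, i.e.\ it stays on $\partial P$, when in fact the paper's computation shows that on $[0,t_0]$ it goes \emph{strictly into} $P$. Concretely, $\gamma_2(\tau)=\bigl(y(t-\tau)-\tau s^*\bigr)/t$ is not $(1-\tau/t)y$, so $(\gamma(\tau),t-\tau)$ is \emph{not} a dilation of $(\mathbf{x},t)$ about the origin and the homogeneity $\varphi^*(\lambda\mathbf{x},\lambda t)=\lambda\varphi^*(\mathbf{x},1)$ simply does not apply to this leg. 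What the paper does instead is verify, via the cubic $(\star)$, that the minimizer $s^*(\gamma(\tau),t-\tau)\equiv\bar s$ is constant on $[0,t_0]$, then carries out the algebra to obtain
\[
\varphi^*(\gamma(\tau),t-\tau) = t-\tau + \dfrac{a(x-b\bar s-ct)^2\tau\bar s}{4t\,(t+a\bar s)^2}\ \geq\ t-\tau,
\]
with strict inequality for $\tau>0$. Your proposal substitutes a reduced candidate $\tilde s$ and expects equality on $[t_0,t]$; the actual relation there (via homogeneity applied to the on-road piece, where $\gamma_1(\tau)/(t-\tau)$ is constant) is again an inequality $\geq t-\tau$. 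This is not a cosmetic difference: realizing that the broken path escapes $\partial P$ is precisely why the field--road regime requires no further work in the $P$-vs-$\overline P$ issue, whereas the rectilinear and $y=0$ cases do.

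That brings up the second gap. You dismiss the distinction between the constraint $(\gamma(\tau),t-\tau)\in P$ and $(\gamma(\tau),t-\tau)\in\overline P$ as ``harmless for the infimum'' without argument. But this is precisely where the paper invests its final, nontrivial step: for $y=0$, $x>0$ the optimal path stays on $\partial P$, and the paper constructs a perturbed family $(\gamma^\epsilon,\eta^\epsilon,l)$ that pushes strictly into $P$ (using strict monotonicity of $\varphi^*$ in $x$) and verifies that the cost converges to $J(\mathbf{x},t)$ as $\epsilon\to0$. A parallel argument is invoked for the rectilinear regime. Without such an approximation the restricted infimum over paths in the open set $P$ is not immediately equal to $J$, since the optimizer you exhibit fails the constraint pointwise.

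For the final assertion $v=\max\{0,J\}$ the paper simply cites \cite[Theorem 5.1]{ref2}, while you sketch a direct argument. Your lower bound ($\theta\equiv0$ and $\theta\equiv t$) is fine, but your upper bound leans on ``extending Freidlin's condition from $\partial P$ to $\overline P$,'' which is neither stated nor proved here, and your direct viscosity verification at the free boundary is only gestured at. These are fillable gaps, but as written they are gaps; the cleanest fix is to follow the cited reference as the paper does.
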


\begin{proof}

The demonstration that \( v = \max\{0, J\} \) under satisfaction of Freidlin's condition is analogous to \cite[Theorem 5.1]{ref2}; ergo, we omit its recapitulation and concentrate exclusively upon the first assertion.

By Lemma 3, we ascertain \( P = \{(\mathbf{x}, t) : \varphi^*(\mathbf{x}, t) > t\} \). For an arbitrary \((\mathbf{x}, t) \in \partial P\), let \(\bar{s} := s^*(\mathbf{x}, t)\), whereupon
\[
\varphi^*(\mathbf{x}, t) = \frac{\left(-x + b\,\bar{s} + c\,t \right)^2}{4(t+a\bar{s})} + \frac{(y+\bar{s})^2}{4t} = t.
\]

We claim that the optimal triplets \((\gamma, \eta, l)\) satisfy \(( \gamma(\tau), t - \tau) \in \overline{P}\), i.e.
\[
\varphi^*(\gamma(\tau), t - \tau) \geq t - \tau.
\]

Consider two distinct regimes:

\textbf{Regime I:} \( y \geq \dfrac{a}{2t}(x-ct)^{2} + b(x-ct) \).\\ 
\\
We have \(s^\ast(x, y, t)=0\) and the optimal path constitutes \(\gamma(\tau) = \left(x(1 - \tau/t), y(1 - \tau/t)\right)\). Hence,
\[
\varphi^*(\gamma(\tau), t - \tau) = \frac{t - \tau}{t} \varphi^*(x, y, t) = t - \tau.
\]

\textbf{Regime II:} \( y < \dfrac{a}{2t}(x-ct)^{2} + b(x-ct) \).\\
\\
Direct calculation reveals that for \( 0 \leq \tau \leq t_0 \) (with \( t_0 \) as defined in the \textbf{Proof of Lemma 3}),
\[
2(\gamma_2(\tau) + \bar{s})(t - \tau + a\bar{s})^2 = a\gamma_1^2(\tau)(t - \tau) - 2(ac-b)\gamma_1(\tau)(t-\tau)^2 + c(ac-2b)(t-\tau)^3.
\]

Consequently, \( a\gamma_1^2(\tau) - 2(ac-b)\gamma_1(\tau)(t-\tau) + c(ac-2b)(t-\tau)^2 \leq 2\gamma_2(\tau)(t - \tau) \) and \( s^*(\gamma(\tau), t - \tau) = \bar{s} \) for \( \tau \in [0, t_0] \). 

Define
\[
\tilde{s}(\tau) := s^* \left( \frac{\gamma(\tau)}{t - \tau}, 1 \right) = \frac{\bar{s}}{t - \tau}.
\]

For \(\tau \in [0,t_0]\), the subsequent derivation obtains:
\begin{align*}
\varphi^*(\gamma(\tau), t - \tau) 
&= \frac{\big[-\gamma_1(\tau)+b\bar{s}+c(t-\tau)\big]^2}{4(t - \tau + a\bar{s})} + \frac{(\gamma_2(\tau) + \bar{s})^2}{4(t - \tau)} \\
&= \frac{(x-b\bar{s}-ct)^2(t - \tau + a\bar{s})}{4(t + a\bar{s})^2} + \frac{(y + \bar{s})^2(t - \tau)}{4t^2}\\
&= (t - \tau) \left( \frac{(x-b\bar{s}-ct)^2(1 + a\tilde{s})}{4(t + a\bar{s})^2} + \frac{(y + \bar{s})^2}{4t^2} \right)\\
&= \frac{t - \tau}{t} \left( \frac{(x-b\bar{s}-ct)^2}{4(t + a\bar{s})} \frac{t}{t + a\bar{s}} + \frac{(y + \bar{s})^2}{4t} + \frac{a(x-b\bar{s}-ct)^2 \tilde{s}t}{4(t + a\bar{s})^2} \right)\\
&= \frac{t - \tau}{t} \left( t - \frac{(x - b\bar{s}-ct)^2 a\bar{s}}{4(t + a\bar{s})^2} + \frac{a(x - b\bar{s}-ct)^2 \tilde{s} t}{4(t + a\bar{s})^2} \right)\\
&= t - \tau + \dfrac{a(x-b\bar{s}-ct)^2\tau\bar{s}}{4t (t + a\bar{s})^{2}} \geq t - \tau.
\end{align*}

At \(\tau = t_0\), we have \(\gamma(t_0) = (x_0, 0)\) and
\[
\varphi^*(x_0, 0, t - t_0) \geq t - t_0,
\]

implying for \(\tau \in [t_0, t)\),
\[
\varphi^*(\gamma(\tau), t - \tau) = (t - \tau) \varphi^*\left(\frac{x_0}{t - t_0}, 0, 1\right) \geq t - \tau.
\]

Conjoining these results, for \(y < \dfrac{a}{2t}(x-ct)^{2} + b(x-ct)\) and \(y \neq 0\), the optimal triplets \((\gamma, \eta, l)\) \(\in\) \(P\). Thus,
\[
J(\mathbf{x}, t) = \inf_{\substack{(\gamma, \eta, l) \in \mathbb{Y}^{\mathbf{x},t} \\ \gamma(t) = 0 \\ (\gamma(\tau), t - \tau) \in P}} \left\{ \int_0^t L(-\eta(\tau)) + F(\gamma, \eta, l)(\tau) d\tau \right\}.
\]

When \(y \geq \dfrac{a}{2t}(x-ct)^{2} + b(x-ct)\) or \(y = 0\), the optimal path satisfies \((\gamma(\tau), t - \tau) \in \partial P\). For the specific case \(x > 0\) and \(y = 0\), the optimal triplet constitutes
\[
\gamma(\tau) = (x(1 - \tau/t), 0), \quad \eta(\tau) = \left(-\frac{x + (ac-b)s^*}{t + as^*}, -\frac{s^*}{t}\right), \quad l(\tau) = \frac{s^*}{t}.
\]

Consider the sequence of control triplets:
\[
\eta^\epsilon(\tau) = 
\begin{cases}
\left( -\frac{x - \epsilon + (ac-b)s^*}{t + as^*}, -\dfrac{s^*}{t} \right), & 0 \leq \tau < t/2, \\
\left( -\frac{x + \epsilon + (ac-b)s^*}{t + as^*}, -\dfrac{s^*}{t} \right), & t/2 \leq \tau \leq t,
\end{cases}
\]

\[
l(\tau) = \frac{s^*}{t}, \quad 0 \leq \tau \leq t,
\]

\[
\gamma^\epsilon(\tau) = \left( x + \int_0^\tau (1+al)\eta_1^{\epsilon} + (ac-b)l,\quad 0 \right), \quad 0 \leq \tau \leq t.
\]

It is manifest that \( (\gamma^\epsilon, \eta^\epsilon, l) \in \mathbb{Y}^{\mathbf{x},t} \) and  
\[
\gamma_1^\epsilon (\tau) > \gamma_1 (\tau), \quad 0 < \tau < t, \quad \gamma^\epsilon (t) = (0, 0).
\]

Given the strict monotonicity of \(\varphi^*\) in \(x\), \( \varphi^* (\gamma^\epsilon (\tau), t - \tau) > t - \tau \), i.e., \( (\gamma^\epsilon (\tau), t - \tau) \in P \). Furthermore, direct computation yields  
\[
\int_0^t L(-\eta^\epsilon (\tau)) + F(\gamma^\epsilon, \eta^\epsilon, l)  d\tau \to J(\mathbf{x}, t) \quad \text{as } \epsilon \to 0,
\]

establishing Freidlin's condition for \( y = 0 \). The case \( y \geq \dfrac{a}{2t}(x-ct)^{2} + b(x-ct) \) admits an analogous treatment.

\end{proof}

\section{CONICAL DOMAINS}

We discuss a novel and natural extension of our approach to the case in a general conical domain. For a fixed $\alpha\in(0,\tfrac{\pi}{2}]$, define
\[
\Omega_{\alpha}=\{(r\sin\vartheta,\; r\cos\vartheta):\ r>0,\ \vartheta\in(\tfrac{\pi}{2}-2\alpha,\tfrac{\pi}{2})\}.
\]

In this setting there are two portions of the road:
\[
\Gamma_0=(0,\infty)\times\{0\}
\qquad\text{and}\qquad
\Gamma_\alpha=\{(x,y):\ y>0,\ \frac{x}{y}=\tan\!\big(\tfrac{\pi}{2}-2\alpha\big)\}.
\]

To formulate the boundary condition on the slanted ray \(\Gamma_\alpha\), it is convenient to introduce the unit tangent and unit outward normal on \(\Gamma_\alpha\). For the geometry above one may take
\[
\mathbf{\tau_\alpha}=(\cos 2\alpha,\;\sin 2\alpha),\qquad
\mathbf{n_\alpha}=(-\sin 2\alpha,\;\cos 2\alpha),
\]

so that, on \(\Gamma_\alpha\),
\[
\partial_\mathbf{s} := \mathbf{\tau_\alpha} \cdot \nabla = \cos(2\alpha)\,\partial_x+\sin(2\alpha)\,\partial_y,
\qquad
\partial_\mathbf{n} := \mathbf{n_\alpha} \cdot \nabla = -\sin(2\alpha)\,\partial_x+\cos(2\alpha)\,\partial_y.
\]

Thus, system (1.2) takes the following form:
\begin{equation}
\begin{cases}
u_t - \Delta u = u(1-u), & \quad (t,x,y) \in (0,\infty) \times \Omega_\alpha, \\[4pt]
a\,u_{xx} - c\,u_x + u_y = 0, & \quad (t,x,y) \in (0,\infty) \times \Gamma_0, \\[4pt]
\widetilde{a}\,\partial_\mathbf{s}^{2} u - \widetilde{c}\,\partial_\mathbf{s} u - \partial_\mathbf{n} u = 0, & \quad (t,x,y) \in (0,\infty) \times \Gamma_\alpha, \\[4pt]
u(x,y,0)=u_0(x,y), &\quad (x,y) \in \overline{\Omega_\alpha}.
\end{cases}
\tag{4.1}
\end{equation}

\(\text{(H1)}\) and \(\text{(H2)}\) in Section 2 ought to be revised so as to comply with the following format:
\[
\begin{cases}
\gamma(0) = (x_\mathbf{s},\,x_\mathbf{n}), \\
l(\tau) \geq 0 \text{ for a.e. } \tau \in [0, t], \\
l(\tau) = 0 \text{ if } \gamma_{\mathbf{n}}(\tau) \neq 0 \text{ for a.e. } \tau \in [0, t],
\end{cases}
\tag{$\mathcal{H}1$}
\]

and
\[
\begin{cases}
\text{the function} \quad \tau \mapsto F(\gamma, \eta, l)(\tau) := l(\tau) G\left( \frac{\eta_\mathbf{s} - \dot{\gamma}_\mathbf{s}}{l} (\tau) \right) \quad \text{is integrable on } [0, t], \\
\dot{\gamma}(\tau) = \eta(\tau) \text{ if } l(\tau) = 0 \text{ for a.e. } \tau \in [0, t], \\
\eta_\mathbf{n}(\tau) = \dot{\gamma}_\mathbf{n}(\tau) - l(\tau) \text{ if } l(\tau) \neq 0 \text{ for a.e. } \tau \in [0, t].
\end{cases}
\tag{$\mathcal{H}2$}
\]

Here, the expression \(l(\tau) G\left( \frac{\eta_\mathbf{s} - \dot{\gamma}_\mathbf{s}}{l} (\tau) \right)\) in $(\mathcal{H}2)$ is defined only on \(\{\tau \in [0, t] : l(\tau) > 0\}\), but we understand that
\[
l(\tau) G\left( \frac{\eta_\mathbf{s} - \dot{\gamma}_\mathbf{s}}{l} (\tau) \right) = 
\begin{cases}
\frac{\left(\eta_\mathbf{s} - \dot{\gamma}_\mathbf{s} - b l\right)^2 (\tau)}{4a l(\tau)} & \text{if } l(\tau) > 0, \\
\\
0 & \text{if } l(\tau) = 0.
\end{cases}
\]

Denote
\[
\widehat{\mathbb{Y}^{\mathbf{x},t}} := \left\{(\gamma, \eta, l) : (\gamma, \eta, l) \text{ satisfies } (\mathcal{H}1) \text{ and } (\mathcal{H}2)\right\}.
\]

We now consider the scenario where the diffusion and drift coefficient on the roads differ, as previously indicated in Equation (4.1). Specifically, we impose the condition:
\[
\widetilde{a} > a > 0\;, \quad \widetilde{c} > c > 0\;.
\]

For any $\widetilde{a} > a > 0\;,\;\widetilde{c} > c > 0\;$, we have \( w_\alpha^{(a, \widetilde{a}, c, \widetilde{c})} \) denotes the solution to the equation:
\[
\min\big\{\partial_{t}\,w_\alpha^{(a, \widetilde{a}, c, \widetilde{c})} + |\nabla w_\alpha^{(a, \widetilde{a}, c, \widetilde{c})}|^2 + 1,\;w_\alpha^{(a, \widetilde{a}, c, \widetilde{c})}\big\}=0 \qquad \text{in}\;\;\mathbb{R_+}\,\times\,\Omega_\alpha, 
\tag{4.2}
\]

and
\begin{align}
a\big(\partial_x w_\alpha^{(a, \widetilde{a}, c, \widetilde{c})}\big)^2 - \partial_y w_\alpha^{(a, \widetilde{a}, c, \widetilde{c})} + c\,\partial_x w_\alpha^{(a, \widetilde{a}, c, \widetilde{c})} &= 0, \qquad \text{on}\;\;\Gamma_0 \tag{4.3} \\
\widetilde{a}\big(\partial_{\textbf{s}} w_\alpha^{(a, \widetilde{a}, c, \widetilde{c})}\big)^2 + \partial_{\mathbf{n}} w_\alpha^{(a, \widetilde{a}, c, \widetilde{c})} + \widetilde{c}\,\partial_{\mathbf{s}} w_\alpha^{(a, \widetilde{a}, c, \widetilde{c})} &= 0, \qquad \text{on}\;\;\Gamma_\alpha \tag{4.4}
\end{align}

Let \(u\) be any solution of \((4.1)\), and let \(u^\varepsilon,\; v^\varepsilon\) be defined analogously as in Section 1. Then
\[
v^\varepsilon\; \longrightarrow\; w_\alpha^{(a, \widetilde{a}, c, \widetilde{c})}
\]
locally uniformly. 

\medskip

Throughout this work we restrict attention to the case of \emph{monotonic variation}. Under this restriction, an increase in \(a\), an increase in \(c\), or a simultaneous increase in both parameters have equivalent effects on the monotonic quantity \(w_\alpha^{(a, \widetilde{a}, c, \widetilde{c})}\) since all Hamiltonians are increasing in both \(a\) and \(c\). Consequently, it suffices to analyse the representative scenario in which \(a\) increases while \(c\) is held fixed. The remaining cases admit analogous analysis and are therefore omitted here.

\medskip

\begin{lemma}
Fix $\alpha\in(0,\tfrac{\pi}{2})$. Then, for every $(t,x,y)\in(0,\infty)\times\Omega_\alpha$,
\[
w_\alpha^{(a, \widetilde{a}, \widetilde{c}, \widetilde{c})}(t,x,y)
\;\geq\; w_\alpha^{(\widetilde{a}, \widetilde{a}, \widetilde{c}, \widetilde{c})}(t,x,y)
\;=\; \max\Big\{0,\; J_\alpha^{(\widetilde{a}, \widetilde{a}, \widetilde{c}, \widetilde{c})}\Big\}
\]

where
\[
J_\alpha^{(\widetilde{a}, \widetilde{a}, \widetilde{c}, \widetilde{c})}\,=\,\inf_{(\gamma,\eta,l)\in \widehat{\mathbb{Y}^{\mathbf{x},t}}} \left\{ \int_0^t L(-\eta(\tau)) + F(\gamma,\eta,l)(\tau) d\tau \,\bigg|\, \gamma(t) = (0,0) \right\},
\]

and, moreover,
\[
J_\alpha^{(\widetilde{a}, \widetilde{a}, \widetilde{c}, \widetilde{c})}(t,x,y) = \min\big\{J^{(\widetilde{a}, \widetilde{a}, \widetilde{c}, \widetilde{c})}(t,x,y),\; J^{(\widetilde{a}, \widetilde{a}, \widetilde{c}, \widetilde{c})}(t,\Psi_\alpha(x,y))\big\}.
\]
\end{lemma}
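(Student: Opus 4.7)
The plan is to prove the three constituent assertions separately: the monotonicity inequality in the diffusion parameter on $\Gamma_0$, the variational representation of $w_\alpha$ in the symmetric-road case, and the decomposition of the cone value as the minimum of two half-plane values via reflection.

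\textbf{Monotonicity.} For $w_\alpha^{(a,\widetilde{a},\widetilde{c},\widetilde{c})} \ge w_\alpha^{(\widetilde{a},\widetilde{a},\widetilde{c},\widetilde{c})}$, I would argue directly from the variational representation. The Legendre transform on $\Gamma_0$ reads $G(q) = (q - \widetilde{c})^2/(4a)$, which is strictly decreasing in $a$. Replacing $a$ by $\widetilde{a} > a$ therefore can only lower the penalty accrued on $\Gamma_0$-excursions, while leaving the interior Lagrangian $L$ and the $\Gamma_\alpha$-penalty untouched, so the infimum over $\widehat{\mathbb{Y}^{\mathbf{x},t}}$ is nonincreasing in $a$. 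Combined with $w = \max\{0, J\}$, this yields the stated inequality. Equivalently, one may verify that $w_\alpha^{(a,\widetilde{a},\widetilde{c},\widetilde{c})}$ is a viscosity supersolution of the symmetric problem (since $a p_1^2 \le \widetilde{a} p_1^2$ for all $p_1$) and apply the cone-analog of Proposition~1.

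\textbf{Variational identity.} For the identity $w_\alpha^{(\widetilde{a},\widetilde{a},\widetilde{c},\widetilde{c})} = \max\{0, J_\alpha^{(\widetilde{a},\widetilde{a},\widetilde{c},\widetilde{c})}\}$, I would transport the framework of Sections~2--3 to the conical geometry. The required steps are: (i) $J_\alpha$ is a viscosity solution of the cone problem (cone analog of Theorem~1), using the control constraints $(\mathcal{H}1)$--$(\mathcal{H}2)$ written in tangential/normal coordinates near each boundary segment; (ii) $v^\varepsilon$ converges to the unique viscosity solution of that problem (cone analog of Theorem~2); (iii) Freidlin's condition holds and therefore $w = \max\{0, J_\alpha\}$ (cone analog of Theorem~4). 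Because $\Gamma_0$ and $\Gamma_\alpha$ carry identical parameters, the penalty $F$ has the same analytic form on both, so the sub- and super-solution verifications and the optimal-path construction of Theorem~4 transfer essentially verbatim.

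\textbf{Reflection decomposition and main obstacle.} For the identity $J_\alpha(t,x,y) = \min\{J(t,x,y),\, J(t,\Psi_\alpha(x,y))\}$, where $\Psi_\alpha$ denotes the rigid motion that maps $\Gamma_\alpha$ onto $\Gamma_0$, the geometric idea is that an optimal triplet in the cone performs its road-sticking phase entirely on one road. Triplets restricted to the bulk together with $\Gamma_0$ embed directly into the half-plane problem and contribute a cost $\ge J(t,x,y)$; triplets restricted to the bulk together with $\Gamma_\alpha$ are carried by $\Psi_\alpha$ into triplets in the half-plane with single road $\Gamma_0$ starting at $\Psi_\alpha(x,y)$, contributing $\ge J(t,\Psi_\alpha(x,y))$, since the Lagrangian is invariant under this isometry (here the equality of road parameters is crucial). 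Conversely, the explicit half-plane optimizers of Theorem~4 (rectilinear or field-then-road, cf.~Remark~2) can be pulled back into $\Omega_\alpha$ to furnish feasible triplets achieving each of the two values, yielding the reverse inequality. The \textbf{main obstacle} is to exclude mixed strategies that alternate between the two roads, since a priori such a hybrid could beat either single-road minimizer. I expect the resolution to come from the strict convexity of the sublevel sets of $\varphi^*$ (Appendix~B): any road-switch in an optimal trajectory must occur at the cone vertex, and strict convexity together with the explicit optimal-path structure allows one to smooth the corner and collapse any two-road trajectory into a single-road one without increasing the cost. This corner-elimination step should account for most of the technical effort.
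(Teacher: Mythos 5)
Your treatment of the monotonicity inequality and of the variational representation $w_\alpha = \max\{0, J_\alpha\}$ (transporting the Section 2--3 machinery to the cone) tracks the paper closely and is essentially the same argument. The substantive divergence, and the substantive gap, is in the decomposition $J_\alpha = \min\{J,\, J\circ\Psi_\alpha\}$, where you take a path-surgery route that the paper deliberately avoids.

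The paper never analyzes optimal paths in the cone. It first uses rotational monotonicity of $J$ (Appendix C) to locate where the minimum selects $J$ versus $\widetilde J := J\circ\Psi_\alpha$ --- namely, $J$ on the sector $\Omega_{\alpha/2}$ nearer $\Gamma_0$ and $\widetilde J$ on the complementary sector, with interface on the bisector $\Gamma_{\alpha/2}$. It then shows that $\min\{J,\widetilde J\}$ is a viscosity solution of the cone Hamilton--Jacobi system: the subsolution property across $\Gamma_{\alpha/2}$ is obtained by mollification and Jensen's inequality (convexity of the Hamiltonian), the supersolution property follows from the elementary fact that the pointwise minimum of two supersolutions is a supersolution, and the identification with $J_\alpha$ follows from uniqueness. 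The question of whether a near-optimal cone trajectory might alternate between the two roads therefore never arises. Your approach, by contrast, must confront exactly that question, and you rightly flag it as the main obstacle; but your sketch (strict convexity of $\Omega$ forces a switch to the vertex, then ``smooth the corner'') is a program, not a proof, and it is precisely the burden the paper's uniqueness argument is constructed to sidestep. Two further gaps in your route: (a) you assert the explicit half-plane optimizers ``can be pulled back into $\Omega_\alpha$,'' but a $J$-optimal path from a point near $\Gamma_\alpha$ may exit the cone through $\Gamma_\alpha$; what rescues the argument is that you only need the pullback for whichever of $J,\widetilde J$ is smaller at the starting point, and it is rotational monotonicity (Appendix C, which you never invoke) that guarantees that path stays in $\Omega_\alpha$; (b) your appeal to isometry invariance of the Lagrangian under $\Psi_\alpha$ is only legitimate because the interior advection term $c\,u_x$ has been dropped in the cone model (4.1)--(4.2), so the relevant interior Lagrangian is $|q|^2/4 - 1$; the Section 2 Lagrangian $L$, carrying the $-\tfrac{c}{2}q_1$ term, is not rotation-invariant, and you should say so explicitly.
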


\begin{proof}

\medskip

The first inequality follows directly from the monotonicity of the Hamiltonians with respect to the road-diffusion parameter $\widetilde{a}$. More precisely, since each Hamiltonian appearing in the Hamilton--Jacobi formulation is nondecreasing in $\widetilde{a}$, the function $w_\alpha^{(\widetilde{a}, \widetilde{a}, \widetilde{c}, \widetilde{c})}$ is a viscosity subsolution of the equation satisfied by $w_\alpha^{(a, \widetilde{a}, \widetilde{c}, \widetilde{c})}$, and the comparison principle (or the variational characterization) yields
\[
w_\alpha^{(a, \widetilde{a}, \widetilde{c}, \widetilde{c})} \ge w_\alpha^{(\widetilde{a}, \widetilde{a}, \widetilde{c}, \widetilde{c})}.
\]

\medskip

By the rotational monotonicity of \(J\) (refer to Appendix C), it follows that on the subdomain $\Omega_{\alpha/2}$ one has
\[
J_\alpha^{(\widetilde{a}, \widetilde{a}, \widetilde{c}, \widetilde{c})} = J^{(\widetilde{a}, \widetilde{a}, \widetilde{c}, \widetilde{c})} \qquad\text{and}\qquad \widetilde J_\alpha^{(\widetilde{a}, \widetilde{a}, \widetilde{c}, \widetilde{c})} = \widetilde J^{(\widetilde{a}, \widetilde{a}, \widetilde{c}, \widetilde{c})}
\]

where \(\widetilde{J}\,=\,J \circ \Psi_{\alpha}\).

Hence, away from the interface line $\Gamma_{\alpha/2}$, the function $J_\alpha^{(\widetilde{a}, \widetilde{a}, \widetilde{c}, \widetilde{c})}$ satisfies, in the viscosity sense,
\begin{equation}
\partial_{t}\,J_\alpha^{(\widetilde{a}, \widetilde{a}, \widetilde{c}, \widetilde{c})} + |\nabla J_\alpha^{(\widetilde{a}, \widetilde{a}, \widetilde{c}, \widetilde{c})}|^2 + 1=0 \qquad\text{in}\;\;\mathbb R_+\times\Omega_\alpha,
\tag{4.5}
\end{equation}

and, on the roads, the pair $(J_\alpha^{(\widetilde{a}, \widetilde{a}, \widetilde{c}, \widetilde{c})},\widetilde J_\alpha^{(\widetilde{a}, \widetilde{a}, \widetilde{c}, \widetilde{c})})$ satisfies in the strong sense the boundary system
\begin{align}
\widetilde{a}\big(\partial_x J_\alpha^{(\widetilde{a}, \widetilde{a}, \widetilde{c}, \widetilde{c})}\big)^2
-\partial_y J_\alpha^{(\widetilde{a}, \widetilde{a}, \widetilde{c}, \widetilde{c})}
+ \widetilde{c}\,\partial_x J_\alpha^{(\widetilde{a}, \widetilde{a}, \widetilde{c}, \widetilde{c})}
&=0, &&\text{on }\Gamma_0 \tag{4.6}\\[4pt]
\widetilde a\big(\partial_{\mathbf{s}} \widetilde J_\alpha^{(\widetilde{a}, \widetilde{a}, \widetilde{c}, \widetilde{c})}\big)^2
+\partial_{\mathbf{n}} \widetilde J_\alpha^{(\widetilde{a}, \widetilde{a}, \widetilde{c}, \widetilde{c})}
+ \widetilde c\,\partial_{\mathbf{s}} \widetilde J_\alpha^{(\widetilde{a}, \widetilde{a}, \widetilde{c}, \widetilde{c})}
&=0, &&\text{on }\Gamma_\alpha. \tag{4.7}
\end{align}

Moreover, $J_\alpha^{(\widetilde{a}, \widetilde{a}, \widetilde{c}, \widetilde{c})}$ is Lipschitz continuous on compact subsets, and therefore it satisfies (4.5) in the classical sense almost everywhere in $\mathbb R_+\times\Omega_\alpha$. 

To promote this local statement to a global viscosity statement, we proceed by approximation. Concretely, let $\rho_\varepsilon$ be a standard space–time mollifier and set
\[
J^{(\widetilde{a}, \widetilde{a}, \widetilde{c}, \widetilde{c}),\varepsilon} := J^{(\widetilde{a}, \widetilde{a}, \widetilde{c}, \widetilde{c})} * \rho_\varepsilon,\qquad \widetilde J^{(\widetilde{a}, \widetilde{a}, \widetilde{c}, \widetilde{c}),\varepsilon} := (J^{(\widetilde{a}, \widetilde{a}, \widetilde{c}, \widetilde{c})}\circ\Psi_\alpha)*\rho_\varepsilon.
\]

Due to the convexity of the Hamiltonian with respect to the gradient, Jensen's inequality implies that $J^{(\widetilde{a}, \widetilde{a}, \widetilde{c}, \widetilde{c}),\varepsilon}$ and $\widetilde J^{(\widetilde{a}, \widetilde{a}, \widetilde{c}, \widetilde{c}),\varepsilon}$ pointwise satisfy
\begin{align*}
\begin{cases}
\partial_t J^{(\widetilde{a}, \widetilde{a}, \widetilde{c}, \widetilde{c}),\varepsilon} + |\nabla J^{(\widetilde{a}, \widetilde{a}, \widetilde{c}, \widetilde{c}),\varepsilon}|^2 + 1 \le 0 \\
\partial_t \widetilde J^{(\widetilde{a}, \widetilde{a}, \widetilde{c}, \widetilde{c}),\varepsilon} + |\nabla \widetilde J^{(\widetilde{a}, \widetilde{a}, \widetilde{c}, \widetilde{c}),\varepsilon}|^2 + 1 \le 0
\end{cases}
\end{align*}

on any fixed compact subset of $\mathbb R_+\times\Omega_\alpha$. Taking the pointwise minimum
\[
v^\varepsilon:=\min\{J^{(\widetilde{a}, \widetilde{a}, \widetilde{c}, \widetilde{c}),\varepsilon},\widetilde J^{(\widetilde{a}, \widetilde{a}, \widetilde{c}, \widetilde{c}),\varepsilon}\}
\]

yields a family of continuous viscosity subsolutions which converges uniformly on compacts to $J_\alpha^{(\widetilde{a}, \widetilde{a}, \widetilde{c}, \widetilde{c})}$. By the stability of viscosity subsolutions under uniform limits (see \cite[Chapter II, Proposition 5.1]{ref12}), the limit $J_\alpha^{(\widetilde{a}, \widetilde{a}, \widetilde{c}, \widetilde{c})}$ is itself a viscosity subsolution of (4.5) on the whole domain $\mathbb R_+\times\Omega_\alpha$. Thus $J_\alpha^{(\widetilde{a}, \widetilde{a}, \widetilde{c}, \widetilde{c})}$ is a strong subsolution of the system (4.5)-(4.6)-(4.7).

\medskip

On the other hand, since $J_\alpha^{(\widetilde{a}, \widetilde{a}, \widetilde{c}, \widetilde{c})}$ is the pointwise minimum of two (strong) solutions, similar to the argument in Theorem 1 \& 2 (refer to Section 2 \& 3), we have $J_\alpha^{(\widetilde{a}, \widetilde{a}, \widetilde{c}, \widetilde{c})}$ is a strong supersolution of (4.5)-(4.6)-(4.7). 

Combining the subsolution and supersolution properties yields that $J_\alpha^{(\widetilde{a}, \widetilde{a}, \widetilde{c}, \widetilde{c})}$ is a strong (viscosity) solution of (4.5)-(4.6)-(4.7).

Finally, similar to the argument in Theorem 4 and by \cite[Theorem 5.1]{ref2}, \( \max\Big\{0,\;J_\alpha^{(\widetilde{a}, \widetilde{a}, \widetilde{c}, \widetilde{c})}\Big\} \) is a strong solution of (4.2)-(4.3)-(4.4) in $\Omega_\alpha$. This completes the proof.

\end{proof}

\begin{theorem}
Let \( \alpha \in (0, \frac{\pi}{2}] \). Suppose
\[
J^{(\widetilde{a}, \widetilde{a}, \widetilde{c}, \widetilde{c})}(1, r \sin(\frac{\pi}{2} - 2\alpha), r \cos(\frac{\pi}{2} - 2\alpha)) \leq J^{(a, a, \widetilde{c}, \widetilde{c})}(1, r, 0) \quad \text{for all \,} r \geq 0,
\]

then
\[
w_\alpha^{(a, \widetilde{a}, \widetilde{c}, \widetilde{c})}(t,x,y) = \max \Big\{ 0, \min \big\{ J^{(a, a, \widetilde{c}, \widetilde{c})}(t, x, y),\;J^{(\widetilde{a}, \widetilde{a}, \widetilde{c}, \widetilde{c})}(t, \Psi_\alpha(x, y)) \big\} \Big\} \quad \text{in \,} (0, \infty) \times \overline{\Rplus^2}.
\]

\end{theorem}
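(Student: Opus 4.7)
The strategy is to verify that the candidate
\[
\bar w(t,x,y) := \max\Big\{0,\ \min\big\{J^{(a,a,\widetilde{c},\widetilde{c})}(t,x,y),\ J^{(\widetilde{a},\widetilde{a},\widetilde{c},\widetilde{c})}(t,\Psi_\alpha(x,y))\big\}\Big\}
\]
is a viscosity solution of the variational inequality (4.2)--(4.4) in the cone, and then invoke uniqueness. Write $M:=\min\big\{J^{(a,a,\widetilde{c},\widetilde{c})},\ J^{(\widetilde{a},\widetilde{a},\widetilde{c},\widetilde{c})}\circ\Psi_\alpha\big\}$. By the Freidlin-condition argument used in Theorem 4 and already adapted to the cone in Lemma 2, it suffices to prove that $M$ solves the Hamilton--Jacobi equation $\partial_t M+|\nabla M|^2+1=0$ in $\mathbb{R}_+\times\Omega_\alpha$ together with the two Wentzell boundary conditions (4.6) on $\Gamma_0$ and (4.7) on $\Gamma_\alpha$; taking the positive part and running Freidlin's argument again then produces a solution of the full variational inequality.

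I would establish the subsolution property exactly as in the proof of Lemma 2. Both $J^{(a,a,\widetilde{c},\widetilde{c})}$ and $J^{(\widetilde{a},\widetilde{a},\widetilde{c},\widetilde{c})}\circ\Psi_\alpha$ are viscosity (in fact a.e.\ classical) solutions of the common convex interior equation on $\mathbb{R}_+\times\Omega_\alpha$, since $\Psi_\alpha$ is an isometry. Mollifying each with a space--time mollifier $\rho_\varepsilon$ and invoking Jensen's inequality produces smooth pointwise subsolutions whose pointwise minimum $v^\varepsilon$ is a continuous viscosity subsolution; letting $\varepsilon\to 0$ and using stability of viscosity subsolutions shows that $M$ is a subsolution of the interior equation. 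For the supersolution property I would use the standard fact that the pointwise minimum of two viscosity supersolutions is itself a supersolution: any smooth $\phi$ touching $M$ from below at $(t_0,x_0,y_0)$ also touches whichever $J_i$ attains the minimum there, and the Hamilton--Jacobi inequality for that $J_i$ is inherited by $\phi$.

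For the two road conditions I would argue that each term of the minimum satisfies the correct boundary condition on its native road: $J^{(a,a,\widetilde{c},\widetilde{c})}$ satisfies (4.6) on $\Gamma_0$, while $J^{(\widetilde{a},\widetilde{a},\widetilde{c},\widetilde{c})}\circ\Psi_\alpha$ satisfies (4.7) on $\Gamma_\alpha$. Crucially, at the \emph{opposite} road each of these factors is $C^\infty$ because the corresponding preimage lies in the open upper half-plane, so it satisfies the interior equation $\mathcal{T}=0$ classically; the viscosity Wentzell condition there, being a min/max with $\mathcal{T}$, is then trivially satisfied. The hypothesis enters in identifying which term achieves the minimum in a one-sided neighbourhood of each road: by the degree-one homogeneity of $J$ proved in Proposition 2, the assumed inequality
\[
J^{(\widetilde{a},\widetilde{a},\widetilde{c},\widetilde{c})}(1,r\sin(\tfrac{\pi}{2}-2\alpha),r\cos(\tfrac{\pi}{2}-2\alpha))\leq J^{(a,a,\widetilde{c},\widetilde{c})}(1,r,0)
\]
propagates to every $t>0$ and, together with an analogous comparison on $\Gamma_\alpha$ that follows from monotonicity of $J$ in the road-diffusion parameter (using $\widetilde a\geq a$) and the smoothness/interior-cost interpretation on $\Gamma_\alpha$, pins down the active term in a neighbourhood of each road.

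The main obstacle will be controlling the internal switching locus $\Sigma:=\{J^{(a,a,\widetilde{c},\widetilde{c})}=J^{(\widetilde{a},\widetilde{a},\widetilde{c},\widetilde{c})}\circ\Psi_\alpha\}$ inside $\Omega_\alpha$: across $\Sigma$ the gradient of $M$ may jump, and although the mollification--Jensen trick of Step~2 circumvents this for the subsolution property, one must verify that the hypothesis prevents $\Sigma$ from meeting either road in a manner that would obstruct the boundary inheritance above. Once the sub- and supersolution checks are complete on $\mathbb{R}_+\times\overline{\Omega_\alpha}$, the comparison principle for the cone (adapted from Proposition 1, as in Remark 1) gives $M=J_\alpha^{(a,\widetilde{a},\widetilde{c},\widetilde{c})}$, whence $\bar w=w_\alpha^{(a,\widetilde{a},\widetilde{c},\widetilde{c})}$ and the theorem follows.
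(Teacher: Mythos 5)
Your overall strategy lines up with the paper's: define the candidate as the positive part of $\widehat{J}:=\min\{J^{(a,a,\widetilde c,\widetilde c)},\,J^{(\widetilde a,\widetilde a,\widetilde c,\widetilde c)}\circ\Psi_\alpha\}$, show that $\widehat{J}$ solves the unobstructed Hamilton--Jacobi system on the cone (using the mollification/Jensen subsolution trick and the ``min of supersolutions'' supersolution trick from Lemma~3), and then invoke the Freidlin argument of Theorem~4 / \cite[Thm.~5.1]{ref2}. The paper's own proof is almost entirely a pointer to Lemma~3 and \cite{ref2}, so your write-up reproduces the same plan with more flesh.

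There is, however, one step you assert that is not correct as stated and is the real crux of why the hypothesis is needed. You claim that ``at the \emph{opposite} road each of these factors is $C^\infty$ \dots\ so it satisfies the interior equation $\mathcal{T}=0$ classically; the viscosity Wentzell condition there, being a min/max with $\mathcal{T}$, is then trivially satisfied.'' This is not trivially true. Suppose $u$ is $C^1$ up to $\{y=0\}$ and $\mathcal{T}[u]=0$ holds classically at a boundary point. For a subsolution test, $\phi$ touching $u-\phi$ from a strict maximum at a boundary point forces $\phi_y\ge u_y$ and $\phi_x=u_x$, $\phi_t=u_t$; then $\mathcal{T}[\phi]=\phi_y^2-u_y^2\ge 0$ whenever $u_y\ge 0$, while $\mathcal{B}[\phi]=\mathcal{B}[u]-(\phi_y-u_y)$. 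If $\mathcal{B}[u]>0$ there is a range $u_y<\phi_y<u_y+\mathcal{B}[u]$ for which $\min\{\mathcal{T}[\phi],\mathcal{B}[\phi]\}>0$ and the subsolution inequality fails. Thus smoothness plus the interior equation is \emph{not} enough; one needs a one-sided sign condition on the boundary operator (effectively $\mathcal{B}[u]\le 0$ for the subsolution and $\mathcal{B}[u]\ge 0$ for the supersolution) at the ``opposite'' road. Lemma~3 avoids this issue precisely because, thanks to rotational monotonicity, on $\Omega_{\alpha/2}$ the minimum \emph{coincides with the factor that satisfies the native boundary condition classically} (so the Wentzell relation holds in the strong sense); the viscosity condition is then inherited, not ``trivially satisfied'' by smoothness alone. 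Your proof therefore needs either (i) to replace the ``trivially satisfied'' claim by a verification of the required sign of the Wentzell operator for the passive factor near each ray, or (ii) to verify that, under the stated hypothesis and homogeneity, the active branch near each road is precisely the one satisfying the classical boundary condition for that road, mirroring the structure of Lemma~3. Without one of these, the boundary inheritance step is a genuine gap. (Minor: you cite (4.6)/(4.7) as the boundary conditions for the Theorem~5 candidate; in this setting the relevant conditions are (4.3)/(4.4), since the coefficients on $\Gamma_0$ are $(a,\widetilde c)$, not $(\widetilde a,\widetilde c)$.)
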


\begin{proof}
Define
\[
\widehat{J}(t, x, y) = \min \big\{ J^{(a, a, \widetilde{c}, \widetilde{c})}(t, x, y),\;J^{(\widetilde{a}, \widetilde{a}, \widetilde{c}, \widetilde{c})}(t, \Psi_\alpha(x, y)) \big\}.
\]

First, following the proof of Lemma 3, we observe that \(\widehat{J}\) is a strong solution of the Hamilton-Jacobi equation (4.2)-(4.3)-(4.4) without obstacle. Then, following the proof of \cite[Theorem 5.1]{ref2}, we conclude that
\[
w_\alpha^{(a, \widetilde{a}, \widetilde{c}, \widetilde{c})}(t,x,y) = \max \left\{ 0, \widehat{J} \right\}.
\]

\end{proof}

\section{NUMERICAL SIMULATION}

This section provides numerical simulations of the level set
\[
\Omega:=\{(x,y)\in\overline{\mathbb{R}_+^2}:\;\varphi^{\ast}(x,y,1) < 1\}.
\]

\begin{figure}[H]
  \centering
  \includegraphics[width=\textwidth]{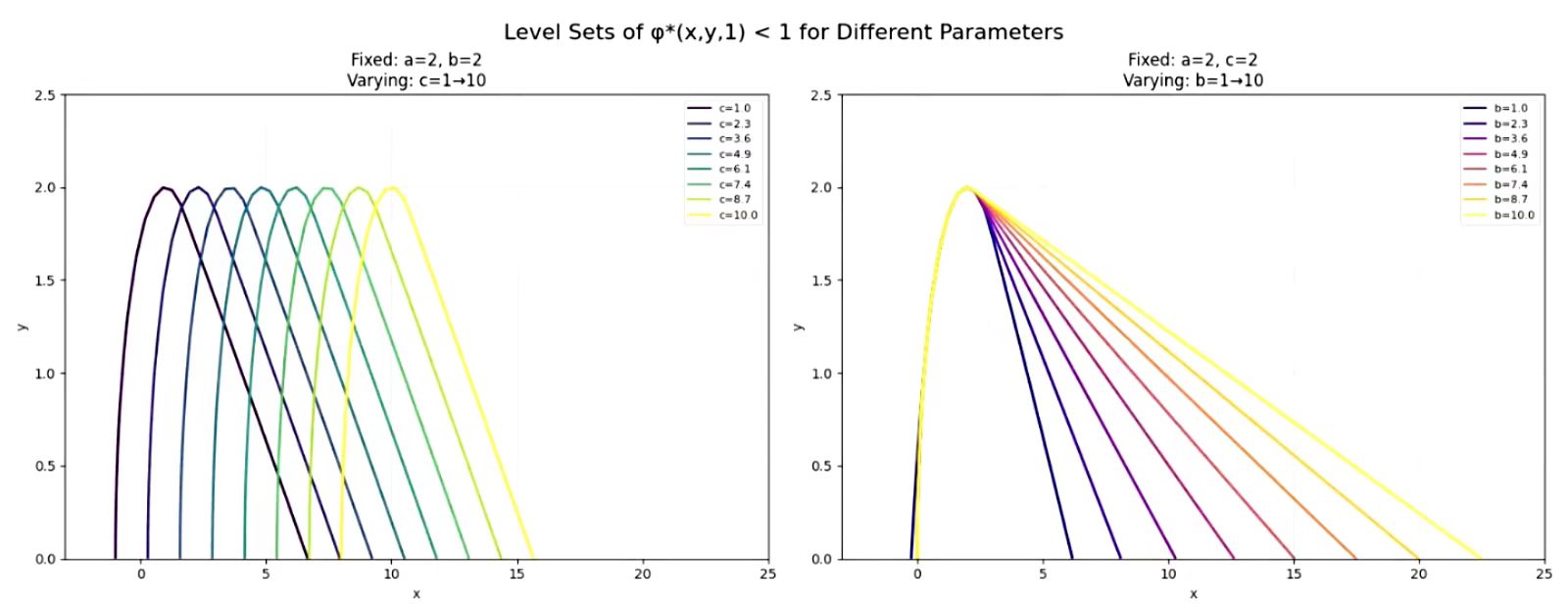}
  \caption{\textbf{Level sets of \(\varphi^*(x,y,1)<1\) for different parameter sweeps. }The panel shows (left) \(a=2,\;b=2\) with \(c\) varying from \(1\) to \(10\) and (right) \(a=2,\;c=2\) with \(b\) varying from \(1\) to \(10\).}
\end{figure}

We produce a panel to illustrate two parameter experiments:
\begin{enumerate}
  \item \textbf{(left)} Fix \(a=2\), \(b=2\); vary \(c\) in \(\{1,2,\dots,10\}\).
  \item \textbf{(right)} Fix \(a=2\), \(c=2\); vary \(b\) in \(\{1,2,\dots,10\}\).
\end{enumerate}

The plotted results illustrate the following qualitative behaviors:
\begin{itemize}
  \item Increasing \(c\) (left) shifts the invaded region downstream and deforms the contour in the \(x\)-direction, as field advection favors movement in the \((+x)\) direction.
  
  \item Increasing \(b\) (right) enhances travel along the road — the invaded region extends further along the road direction and the critical transition between "field-dominated" and "road-assisted" propagation moves.
\end{itemize}

\bigskip

\appendix
\section*{APPENDIX}

\section{Asymptotic Derivation of the Wentzell-Type Boundary Condition}

We consider a two-layer field-road geometry in \(\overline{\Rplus^2}\): the road is the thin strip $\{(x,y):0<y<\delta\}$ of thickness $\delta>0$, and the field occupies $\{y>\delta\}$. Let $u(x,y,t)$ denote the population density. The goal of this section is to derive, in the singular limit $\delta\downarrow0$, the effective boundary condition on $y=0$ of Wentzell-type that couples the field to the concentrated transport along the road.

\subsection{Model setup and notation}

In the field ($y>\delta$) we take unit diffusion and a tangential advection $c$ along the $x$-direction. Inside the road ($0<y<\delta$) the longitudinal diffusion (along-$x$) is large and denoted by $\sigma=\sigma(\delta)>0$, and the tangential advection in the road is denoted by $\widetilde b(\delta)$, which may depend on $\delta$. The reaction term is the Fisher-KPP nonlinearity $f(u)=u(1-u)$ (the argument below does not rely on its precise form beyond being $\mathcal O(1)$). Thus, the governing equations are
\begin{equation}\label{eq:full_system}
\begin{cases}
u_t - \Delta u + c\,u_x = u(1-u), & y>\delta,\\[4pt]
u_t - \sigma(\delta)\,(u_{xx}+u_{yy}) + \widetilde b(\delta)\,u_x = u(1-u), & 0<y<\delta.
\end{cases}
\end{equation}

We write the advective-diffusive flux as
\[
\mathbf J = -D\,\nabla u + (v_{\parallel}\,u,\;0),
\]
with $(D,\,v_{\parallel})=(1,\,c)$ in the field and $(D,\,v_{\parallel})=(\sigma(\delta),\,\widetilde b(\delta))$ in the road. The upward unit normal at the interface $G_\delta:=\{y=\delta\}$ is $\mathbf{n}=(0,\,1)$.

\subsection{Interface conditions}

We impose continuity of density across the microscopic interface,
\begin{equation}\label{eq:continuity}
u(x,\delta^+,t)=u(x,\delta^-,t),
\end{equation}

and the impermeability (no normal flux) at the road bottom:
\begin{equation}\label{eq:bottom_no_flux}
\partial_y u(x,0^+,t)=0.
\end{equation}

Compute the net normal flux across $G_\delta$ from the field and road sides:
\[
F^+ = \int_{G_\delta} \mathbf J(x,\delta^+,t)\cdot(0,\,-1)\,dx
       = \int_{G_\delta} \partial_y u(x,\delta^+,t)\,dS,
\]

\[
F^- = \int_{G_\delta} \mathbf J(x,\delta^-,t)\cdot(0,\,1)\,dx
       = \int_{G_\delta} -\sigma(\delta)\,\partial_y u(x,\delta^-,t)\,dS.
\]

Requiring flux continuity $F^+\,+\,F^-\,=\,0$ yields the pointwise transmission condition
\begin{equation}\label{eq:transmission}
\partial_y u(x,\delta^+,t)=\sigma(\delta)\,\partial_y u(x,\delta^-,t).
\end{equation}

\subsection{Road averaging and integrated equation}
Introduce the road-averaged density
\begin{equation}\label{eq:road_avg}
v(x,t):=\frac{1}{\delta}\int_0^\delta u(x,y,t)\,dy.
\end{equation}

Integrate the road equation in \eqref{eq:full_system} over $y\in(0,\delta)$:

\begin{equation}\label{eq:integrated_road}
\int_0^\delta u_t\,dy - \sigma(\delta)\int_0^\delta(u_{xx}+u_{yy})\,dy + \widetilde b(\delta)\int_0^\delta u_x\,dy
= \int_0^\delta u(1-u)\,dy.
\end{equation}

Using $\int_0^\delta u_t\,dy=\delta v_t$ and $\int_0^\delta u_x\,dy=\delta v_x$, and applying the fundamental theorem of calculus to the $y$-second derivative,
\[
\sigma(\delta)\int_0^\delta u_{yy}\,dy = \sigma(\delta)\bigl[\partial_y u(x,\delta^-,t)-\partial_y u(x,0^+,t)\bigr].
\]

With \eqref{eq:bottom_no_flux} and \eqref{eq:transmission} we have
\[
\sigma(\delta)\int_0^\delta u_{yy}\,dy = \sigma(\delta)\,\partial_y u(x,\delta^-,t)=\partial_y u(x,\delta^+,t).
\]

Substituting into \eqref{eq:integrated_road} yields
\begin{equation}\label{eq:before_limit}
\delta v_t - \sigma(\delta)\delta v_{xx} + \widetilde b(\delta)\,\delta v_x - \partial_y u(x,\delta^+,t)
= \int_0^\delta u(1-u)\,dy.
\end{equation}

Rearranged,
\[
-\,\sigma(\delta)\delta\,v_{xx} + \widetilde b(\delta)\,\delta\,v_x - \partial_y u(x,\delta^+,t)
= \int_0^\delta u(1-u)\,dy - \delta v_t. \tag{\(\ast\)}
\]

\subsection{Singular limit and scaling assumptions}

We pass to the thin-road limit $\delta\downarrow0$ under the following physically natural scalings and assumptions:
\begin{enumerate}
  \item \textbf{Integrated longitudinal transport finite:} the product
  \[
    a := \lim_{\delta\downarrow0} \sigma(\delta)\,\delta
  \]
  exists and satisfies $a>0$. The constant $a$ represents the integrated diffusive transport capacity along the vanishing road.
  
  \item \textbf{Concentrated tangential advection:} the scaled advection
  \[
    b := \lim_{\delta\downarrow0} \widetilde b(\delta)\,\delta
  \]
  exists (possibly $b=0$). Equivalently $\widetilde b(\delta)=b/\delta + o(1/\delta)$. If $\widetilde b(\delta)=\mathcal O(1)$ then $b=0$ and no first-order tangential term persists in the limit.
  
  \item \textbf{Negligible storage and reaction in the vanishing layer:}
  \[
    \delta v_t \to 0, \qquad \int_0^\delta u(1-u)\,dy = \mathcal O(\delta) \to 0,
  \]
  as $\delta\downarrow0$. This corresponds to the road being thin enough that its volumetric storage and intrinsic reaction are negligible on the time scales of interest.
  
  \item \textbf{Trace convergence:} the average converges to the trace on the upper boundary,
  \[
    v(x,t)\to u(x,0^+,t)\qquad\text{as }\delta\downarrow0.
  \]
\end{enumerate}

\subsection{Limit and the Wentzell boundary condition}
Let $\delta\downarrow0$, we obtain the pointwise limit at $y=0$:
\[
-\,a\,\partial_{xx} v(x,t) + b\,\partial_x v(x,t) - \partial_y u(x,0^+,t) = 0.
\]

Replacing $v(x,t)$ by its limit $u(x,0^+,t)$ yields the effective boundary condition (Wentzell-type)
\[
a\,u_{xx}(x,0,t) - b\,u_x(x,0,t) + \partial_y u(x,0^+,t) = 0.
\tag{\(\Delta\)}
\]

\section{Strict Convexity of \(\Omega\)}

\medskip

By homogeneity, it suffices to study the normalized profile at time \(t=1\). Define
\[
\Phi(x,y)\;:=\;\varphi^*(x,y,1)
\;=\;\min_{s\ge 0} \; g(x,y,s),
\qquad
g(x,y,s):=\frac{(-x + b s + c)^2}{4(1+ a s)}+\frac{(y+s)^2}{4}.
\]

We consider the level set
\[
\Omega:=\{(x,y)\in\overline{\mathbb{R}_+^2}:\;\Phi(x,y) < 1\}.
\]

\begin{proposition}
The function \(\Phi\) is convex on its domain, and the level set \(\Omega\) is strictly convex.
\end{proposition}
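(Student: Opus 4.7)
The plan is to prove convexity and strict convexity separately.

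\textbf{Convexity of $\Phi$.} First I would establish joint convexity of
\[
g(x,y,s)\;=\;\frac{(bs+c-x)^{2}}{4(1+as)}+\frac{(y+s)^{2}}{4}
\]
on $\R\times\R\times(-1/a,\infty)$. The second summand is a jointly convex quadratic in $(y,s)$. The first summand is the composition of the quadratic-over-linear function $(u,v)\mapsto u^{2}/(4v)$ (jointly convex on $\R\times(0,\infty)$) with the affine map $(x,s)\mapsto(bs+c-x,\,1+as)$, hence jointly convex in $(x,s)$. Summing gives joint convexity of $g$ in $(x,y,s)$. Since the partial infimum of a jointly convex function over a convex constraint set is convex, $\Phi(x,y)=\min_{s\ge 0}g(x,y,s)$ is convex on $\overline{\Rplus^{2}}$, and $\Omega=\{\Phi<1\}$ is convex as a sublevel set.

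\textbf{Strict convexity --- setup.} I would argue by contradiction: suppose a nondegenerate segment $[P,Q]$ lies in $\partial\Omega=\{\Phi=1\}$, so $\Phi\equiv 1$ on $[P,Q]$. I would split $[P,Q]$ according to the regimes $S_{0}$ and $S$ of Proposition 2. On any subsegment inside $S_{0}$, $\Phi(x,y)=((x-c)^{2}+y^{2})/4$ is strictly convex quadratic, so it cannot be constant. The interface $\partial S_{0}\cap\partial S$ is the strictly convex parabola $y=\tfrac{a}{2}(x-c)^{2}+b(x-c)$, which contains no line segments. Hence the substantive case is $[P,Q]\subset\overline{S}$.

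\textbf{The road-assisted case.} Set $u=bs^{*}+c-x$ and $v=1+as^{*}$. Differentiating the first-order condition $g_{s}(x,y,s^{*})=0$ and applying the envelope theorem, a direct computation yields
\[
\nabla^{2}\Phi(x,y)\;=\;\frac{1}{D}\begin{pmatrix}v\\ bv-au\end{pmatrix}\begin{pmatrix}v & bv-au\end{pmatrix},\qquad D=2\bigl((bv-au)^{2}+v^{3}\bigr),
\]
a rank-one matrix with kernel spanned by $\mathbf{d}(x,y):=\bigl((bv-au)/v,\,-1\bigr)$. Along any direction not parallel to $\mathbf{d}$, the second derivative of $\Phi$ is strictly positive, so $\Phi$ is strictly convex on that line and the assumption $\Phi\equiv 1$ is violated. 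The delicate subcase is $[P,Q]\parallel\mathbf{d}$. I would verify that $u/v$ is preserved under motion along $\mathbf{d}$ (so $\mathbf{d}$ remains the null direction all along $[P,Q]$) and compute
\[
\frac{d\Phi}{d\epsilon}\Big|_{\mathbf{d}}\;=\;\frac{a\,u^{2}}{4\,v^{2}}.
\]
On $S$ one has $u\neq 0$: if $u=0$ and $s^{*}>0$, then $g_{s}(s^{*})=(y+s^{*})/2>0$, contradicting the first-order condition. Hence $\Phi$ is affine with strictly positive slope along $\mathbf{d}$, and therefore non-constant on $[P,Q]$, producing the required contradiction.

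\textbf{Main obstacle.} The crux is the rank-one degeneracy of $\nabla^{2}\Phi$ on $S$: strict convexity of $\Omega$ cannot be deduced from strict convexity of $\Phi$, which already fails. The rescue is the quantitative identity $d\Phi/d\epsilon=au^{2}/(4v^{2})$ along the null direction together with the a priori exclusion $u\neq 0$ on $S$. Without these two ingredients one cannot preclude a line segment embedded in the degenerate direction of $\Phi$.
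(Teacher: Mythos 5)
Your proof of the convexity of \(\Phi\) (joint convexity of \(g\) via the quotient-of-quadratic-over-linear trick, followed by partial minimization over a convex set) is essentially identical to the paper's Step~1 and Step~2.

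For the strict convexity of \(\Omega\), however, you take a genuinely different route. The paper works in the three-dimensional variable \((x,y,s)\): it exploits the equality case in Jensen's inequality to conclude that \(g\) must be affine along a straight segment in \((x,y,s)\)-space, extracts from the vanishing of the two second directional derivatives the constraints \(y+s\equiv C\) and \(u/v\equiv k\), and then plays the first-order optimality condition against the restriction of \(h_1\) to the segment to force \(k=C=0\). Your argument instead stays in the two variables \((x,y)\): you compute \(\nabla^2\Phi\) explicitly via the implicit function theorem, observe it is the rank-one matrix \(\tfrac{1}{2(v^3+(bv-au)^2)}\binom{v}{bv-au}\!\begin{pmatrix}v & bv-au\end{pmatrix}\), identify its kernel direction \(\mathbf{d}=((bv-au)/v,-1)\), and then use the envelope formula plus the first-order condition to show \(\nabla\Phi\cdot\mathbf{d}=au^2/(4v^2)>0\) on \(S\) (and your exclusion of \(u=0\) via \(g_s(s^*)=(y+s^*)/2>0\) is exactly right). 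So even though \(\Phi\) is not strictly convex — the Hessian is degenerate on \(S\) — its gradient does not vanish along the degenerate direction, which rules out a flat piece in the level set. Your handling of the \(S_0\) and interface cases up front also parallels the paper's concluding remark. Both proofs land on the same algebraic culprit (\(u=0\) forces \((c,0)\)), but your 2D–Hessian route is more computational and makes the geometric mechanism — a rank-one degeneracy offset by a strictly positive gradient component — completely transparent, whereas the paper's 3D–Jensen route avoids the envelope-Hessian computation at the price of a more delicate bookkeeping of the equality conditions. One small item you should state explicitly if you flesh this out: that the constancy of the segment direction, being in the one-dimensional kernel at every interior point, forces \(w/v\) (hence \(u/v\)) to be constant along the segment — you gesture at this ("verify that \(u/v\) is preserved under motion along \(\mathbf{d}\)") but the logically cleaner formulation is that the fixed segment direction must coincide with the pointwise kernel direction, which pins \(u/v\) down; the computation that motion along \(\mathbf{d}\) preserves \(u/v\) (via \(ds^*=d\epsilon\)) is a correct verification but is not strictly needed once you observe \(\nabla\Phi\cdot\mathbf{d}\neq 0\) pointwise.
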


\begin{proof}
We split the proof into three steps.

\medskip

\textbf{Step 1: Joint convexity of \(g(x,y,s)\).}
Write
\[
4g(x,y,s)=h_1(x,s)+h_2(y,s),
\qquad
h_1(x,s)=\frac{(-x+bs+c)^2}{1+as},\quad h_2(y,s)=(y+s)^2.
\]

The function \(h_2\) has Hessian
\[
\nabla^2_{(y,s)} h_2 = \begin{pmatrix}2 & 2\\[4pt] 2 & 2\end{pmatrix},
\]

which is positive semidefinite; hence \(h_2\) is convex in \((y,s)\).

For \(h_1\) define \(u(x,s):=-x + b s + c\) and \(v(s):=1 + a s\). Consider \(k(u,v)=u^2/v\) for \(v>0\). Its Hessian equals
\[
\nabla^2 k(u,v)=\begin{pmatrix} \dfrac{2}{v} & -\dfrac{2u}{v^2}\\[6pt] -\dfrac{2u}{v^2} & \dfrac{2u^2}{v^3}\end{pmatrix},
\]

and for any \(w=(w_1,w_2)^\top\),
\[
w^\top \nabla^2 k(u,v) w = \frac{2}{v}\Bigl( w_1 - \frac{u}{v} w_2\Bigr)^2 \ge 0.
\]

Since \((u,v)\) is an affine function of \((x,s)\), \(h_1(x,s)=k(u(x,s),v(s))\) is convex in \((x,s)\). Therefore \(4g=h_1+h_2\) is convex in \((x,y,s)\), and so is \(g\).

\medskip

\noindent\textbf{Step 2: Convexity of \(\Phi\).}
Let \((x_i,y_i)\), \(i=1,2\), be arbitrary and fix \(t\in(0,1)\). For each \(i\) pick a minimizer \(s_i^*\ge0\) of \(s\mapsto g(x_i,y_i,s)\). Define
\[
(x_t,y_t):=t(x_1,y_1)+(1-t)(x_2,y_2),\qquad s_t:=t s_1^* + (1-t) s_2^*.
\]

By convexity of \(g\),
\[
g(x_t,y_t,s_t)\le t g(x_1,y_1,s_1^*) + (1-t) g(x_2,y_2,s_2^*) = t\Phi(x_1,y_1)+(1-t)\Phi(x_2,y_2).
\]

Since \(\Phi(x_t,y_t)\le g(x_t,y_t,s_t)\) we conclude \(\Phi(x_t,y_t)\le t\Phi(x_1,y_1)+(1-t)\Phi(x_2,y_2)\). Thus \(\Phi\) is convex.

\medskip

\noindent\textbf{Step 3: Strict convexity of \(\Omega\).}
Assume, toward a contradiction, that there exist distinct boundary points \((x_1,y_1)\neq(x_2,y_2)\) with \(\Phi(x_i,y_i)=1\) and some \(t_0\in(0,1)\) such that\(\Phi(x_{t_0},y_{t_0})=1\), where\((x_{t_0},y_{t_0})=t_0(x_1,y_1)+(1-t_0)(x_2,y_2)\).

Let \(s_i^*\) be the minimizer at \((x_i,y_i)\) (so \(g(x_i,y_i,s_i^*)=1\)) and set
\[
s_{t_0}=t_0 s_1^* + (1-t_0) s_2^*.
\]

Convexity of \(g\) yields
\[
1=\Phi(x_{t_0},y_{t_0})\le g(x_{t_0},y_{t_0},s_{t_0})\le t_0 g(x_1,y_1,s_1^*)+(1-t_0)g(x_2,y_2,s_2^*)=1,
\]

hence all inequalities are equalities. Equality in the convexity inequality implies that \(g\) is affine along the straight segment
\[
\gamma(r)=(x(r),y(r),s(r))=r(x_1,y_1,s_1^*)+(1-r)(x_2,y_2,s_2^*),\quad r\in[0,1].
\]

Consequently the second directional derivative of \(g\) in direction \(v=(x_2-x_1,y_2-y_1,s_2^*-s_1^*)\) vanishes for all interior points of the segment:
\[
D^2_v\,g(x(r),y(r),s(r))\equiv 0\quad\text{for } r\in(0,1).
\]

Compute the two contributions to \(D^2_v(4g)\):

(i) For \(h_2(y,s)=(y+s)^2\) we have \(D^2_v\,h_2 = 2(dy + ds)^2\). Thus it must hold on the segment that
\[
dy + ds = 0,
\]

i.e. \(y(r)+s(r)\) is constant along the segment; denote this constant by \(C\ge0\).

(ii) For \(h_1\), with \(u=-x+bs+c,\ v=1+as\), the second directional derivative reads
\[
D^2_v\,h_1 = \frac{2}{v}\Bigl( -dx + b\,ds - \frac{u}{v} a\,ds\Bigr)^2.
\]

Thus \(D^2_v\,h_1\equiv 0\) implies
\[
-\,dx + \Bigl(b - a\frac{u}{v}\Bigr) ds = 0,
\]

which in turn is equivalent to \(d\bigl(\tfrac{u}{v}\bigr)=0\). Hence \(\tfrac{u}{v}\) is constant along the segment; denote this constant by \(k\in\mathbb{R}\). Summarizing:
\[
y(r)+s(r)\equiv C,\qquad \frac{u(x(r),s(r))}{1+a s(r)}\equiv k\quad\text{for all } r\in[0,1].
\]

Now use the first-order optimality condition at an endpoint \((x_i,y_i,s_i^*)\) (where \(s_i^*\) is an interior minimizer):
\[
\frac{\partial g}{\partial s}(x_i,y_i,s_i^*) = 0.
\]

After straightforward differentiation (quotient rule), this condition is equivalent to
\[
\frac{2u b v - a u^2}{v^2} + 2(y+s) = 0,
\]

where \(u=-x+bs+c\) and \(v=1+as\). Substituting \(u=k v\) and \(y+s=C\) yields
\[
2 k b - a k^2 + 2 C = 0,
\]

or equivalently
\[
a k^2 + 2 C = 2 b k.
\]

On the other hand, since \(u=k v\) we have
\[
h_1(x(r),s(r))=\frac{u^2}{v}=\frac{(k v)^2}{v}=k^2 v = k^2(1+a s(r)),
\]

so along the whole segment \(\partial_s h_1 = a k^2\). Inserting this into the one-dimensional first-order condition (which reads \(\tfrac{1}{4}(\partial_s h_1 + 2(y+s))=0\)) gives
\[
k^2 a + 2 C = 0.
\]

Since \(a>0\) and \(C\ge0\), we conclude \(k^2 a = 0\) and \(C=0\). Hence \(k=0\) and \(C=0\). Thus along the entire segment
\[
u\equiv 0,\qquad y\equiv 0,\qquad s\equiv 0.
\]

From \(u\equiv 0\) we deduce \(-x + bs + c \equiv 0\); with \(s\equiv 0\) this gives \(x\equiv c\). Consequently every point on the segment equals \((c,0)\), contradicting the assumption that the two endpoints are distinct. Therefore the assumption was false and \(\Omega\) is strictly convex.

\begin{remark}
In the above argument, we used the first-order condition $\partial_s g(x,y,s^*)=0$ at the endpoints of a hypothetical straight segment lying in the level set, which assumes that the corresponding minimizers $s^*$ at those endpoints are interior ($s^*>0$). We now briefly discuss the remaining boundary cases.

\begin{itemize}
\item If both endpoints have $s^*=0$, then $\Phi(x,y)=g(x,y,0)=\frac{(x-c)^2+y^2}{4}$. The level $\Phi=1$ in this regime is a portion of a circle centered at $(c,0)$ and thus cannot contain a non-degenerate straight segment in the half-plane $y\ge0$ unless the two endpoints coincide (indeed a circle arc is strictly convex). Hence, this case does not produce a non-trivial line segment in the level set boundary.

\item If one endpoint has $s^*=0$ and the other has $s^*>0$, the continuity of $\Phi$ and the uniqueness of minimizers (Proposition 2) imply that along a small neighborhood the minimizer cannot jump from $0$ to a strictly positive value while keeping $g$ affine; a direct contradiction with the equality condition in convexity. Consequently, this mixed case is impossible.
\end{itemize}

Combining these observations with the interior case treated above completes the proof that no non-degenerate straight segment can lie in $\{\Phi=1\}$; hence $\Omega=\{\Phi<1\}$ is strictly convex.
\end{remark}

\end{proof}

\section{Rotational Monotonicity of \(J\)}

\begin{theorem}
\(J\) is rotationally nondecreasing from the positive \(x\)-axis to the positive \(y\)-axis.
\end{theorem}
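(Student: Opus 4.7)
The plan is to invoke the envelope theorem together with the uniqueness of the minimizer $s^{\ast}$ established in Proposition 2, reducing the statement to a direct algebraic inequality. Since $J = \varphi^{\ast} - t$ and the rotation is performed at fixed $t$, rotational monotonicity of $J$ is equivalent to that of $\varphi^{\ast}$. Parametrizing the first-quadrant arc of radius $r>0$ as $(x,y) = (r\cos\theta, r\sin\theta)$ with $\theta\in[0,\pi/2]$, the chain rule reduces the claim to showing
\[
-y\,\varphi^{\ast}_x + x\,\varphi^{\ast}_y \;\geq\; 0 \qquad \text{for all } (x,y) \in \overline{\Rplus^2} \text{ with } x,\,y \geq 0.
\]

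First, I would compute $\varphi^{\ast}_x$ and $\varphi^{\ast}_y$ by differentiating
\[
f(x,y,t,s) = \frac{(-x+bs+ct)^2}{4(t+as)} + \frac{(y+s)^2}{4t}
\]
while holding $s=s^{\ast}(x,y,t)$ frozen. Proposition 2 supplies the required ingredients: the minimizer is unique; on the interior regime $s^{\ast}>0$ the first-order condition $f_s(s^{\ast})=0$ annihilates the chain-rule contribution of $\partial_x s^{\ast}$ and $\partial_y s^{\ast}$; and on the flat regime $s^{\ast}\equiv 0$ in $S_0$ the envelope identity is trivial. The resulting closed forms are
\[
\varphi^{\ast}_x \;=\; -\,\frac{-x + b s^{\ast} + c t}{2(t + a s^{\ast})}, \qquad \varphi^{\ast}_y \;=\; \frac{y + s^{\ast}}{2t}.
\]

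Second, I would substitute these expressions and clear the strictly positive denominator $2t(t + a s^{\ast})$. The $\pm xyt$ contributions cancel, and the quantity $2t(t+as^{\ast})\bigl(-y\,\varphi^{\ast}_x + x\,\varphi^{\ast}_y\bigr)$ collapses to
\[
b\,s^{\ast}\,y\,t \;+\; y\,c\,t^2 \;+\; a\,s^{\ast}\,x\,y \;+\; x\,s^{\ast}\,t \;+\; a\,x\,(s^{\ast})^2,
\]
a sum of five manifestly non-negative terms under the standing positivity of $a,b,c,t$ and the conventions $x,y\ge 0$, $s^{\ast}\ge 0$. This yields the desired angular inequality and hence the rotational monotonicity of $J$.

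The only delicate point I expect to have to address is the validity of the envelope formulas across the transition curve $\partial S_0$, where $s^{\ast}$ transitions from the boundary value $0$ to the interior of $[0,\infty)$. This is fully handled by the global $\mathcal{C}^1$-regularity of $\varphi^{\ast}$ asserted in Proposition 2, item 2: the two envelope formulas computed on $S_0$ and on $S$ separately are continuous in $s^{\ast}$, agree at $s^{\ast}=0$, and therefore assemble into a single continuous expression on all of $\overline{\Rplus^2}\times\R_+$. Consequently, the algebraic positivity above propagates uniformly across the transition boundary and completes the argument.
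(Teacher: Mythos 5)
Your proposal is correct, and after clearing denominators you land on precisely the same algebraic certificate as the paper: the quantity $b s^{\ast} y t + c y t^2 + a s^{\ast} x y + s^{\ast} x t + a x (s^{\ast})^2$, a sum of manifestly nonnegative terms. The route you take to get there, however, is genuinely different. You differentiate the value function $\varphi^{\ast}$ in $\theta$ via the envelope theorem at the unique minimizer $s^{\ast}$, which forces you to invoke the first-order condition $f_s(s^{\ast})=0$ on the interior regime and to lean on the global $\mathcal{C}^1$-regularity from Proposition~2 to glue the two envelope formulas across the transition curve $\partial S_0$. The paper instead shows that for \emph{every} fixed $s\ge 0$ the one-parameter family $\theta\mapsto h(s,\theta)$ is nondecreasing on $[0,\pi/2]$ — the same positivity computation, now parametrized by $s$ rather than evaluated at $s^{\ast}$ — and then finishes with the elementary remark that the pointwise infimum over $s$ of a family of nondecreasing functions is itself nondecreasing. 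This sidesteps the envelope theorem, the case split on $s^{\ast}=0$ versus $s^{\ast}>0$, and any appeal to regularity of $s^{\ast}$ or $\varphi^{\ast}$; it also survives unchanged if the minimizer were not unique. Your argument buys a direct expression for the angular derivative of $\varphi^{\ast}$ itself (which could be useful elsewhere), at the cost of importing Proposition~2 as a prerequisite; the paper's version is self-contained and slightly more robust. Both are valid proofs.
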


\begin{proof}
Fix \(r\ge0\) and \(t>0\). Define
\[
\Phi(\theta):=\varphi^*(r\cos\theta,r\sin\theta,t)
=\min_{s\ge0} h(s,\theta),
\]

where
\[
h(s,\theta):=\frac{(-r\cos\theta+b s+c t)^2}{4(t+a s)}+\frac{(r\sin\theta+s)^2}{4t},\qquad s\ge0,\ \theta\in\big[0,\tfrac{\pi}{2}\big].
\]

We first show that for each fixed \(s\ge0\) the function \(\theta\mapsto h(s,\theta)\) is nondecreasing on \([0,\tfrac{\pi}{2}]\). Compute the partial derivatives (with \(s\) fixed):
\[
\frac{\partial h}{\partial x}=\frac{x-b s-c t}{2(t+a s)},\qquad
\frac{\partial h}{\partial y}=\frac{y+s}{2t}.
\]

Using \(x=r\cos\theta\) and \(y=r\sin\theta\), the chain rule yields
\[
\frac{\partial h}{\partial\theta}
= -r\sin\theta\cdot\frac{r\cos\theta-b s-c t}{2(t+a s)}
  + r\cos\theta\cdot\frac{r\sin\theta+s}{2t}.
\]

Multiply both sides by \(\dfrac{2t(t+a s)}{r}\) (which is positive), to obtain
\[
\frac{2t(t+a s)}{r}\,\frac{\partial h}{\partial\theta}
= -t(r\cos\theta-b s-c t)\sin\theta + (t+a s)(r\sin\theta+s)\cos\theta.
\]

Expanding and simplifying gives
\[
\frac{2t(t+a s)}{r}\,\frac{\partial h}{\partial\theta}
= t(b s + c t)\sin\theta + a s\, r\sin\theta\cos\theta + (t+a s)s\cos\theta.
\]

Under the hypotheses \(a,b,c > 0\), \(t>0\), \(s\ge0\), \(r\ge0\) and \(\theta\in[0,\tfrac{\pi}{2}]\), the right-hand side is a sum of nonnegative terms; therefore \(\partial_\theta\,h(s,\theta)\ge0\). Hence for each \(s\ge0\) the map \(\theta\mapsto h(s,\theta)\) is nondecreasing.

\medskip

Finally, given \(0\le\theta_1\le\theta_2\le\tfrac{\pi}{2}\), for every \(s\ge0\) we have \(h(s,\theta_1)\le h(s,\theta_2)\). Taking the minimum over \(s\ge0\) on both sides yields \(\Phi(\theta_1)\le\Phi(\theta_2)\). Thus \(\Phi\) is nondecreasing on \([0,\tfrac{\pi}{2}]\).

\medskip

Since \(J(\mathbf{x},t)=\varphi^*(\mathbf{x},t)-t\), subtracting the constant \(t\) does not affect monotonicity, and the proof is complete.

\end{proof}

\section*{ACKNOWLEDGMENTS}
The author gratefully acknowledges Professor Haomin Huang for his meticulous feedback and insightful suggestions, which significantly improved the clarity and rigor of the manuscript. The discussions with him during the completion of the proofs were instrumental in strengthening the robustness of the theoretical framework.

\end{document}